\providecommand{\hhalf}{\mbox{\hspace{0.5em}}}
\theoremstyle{plain}
\newtheorem{theorem}{Theorem}[section]
\newtheorem{corollary}[theorem]{Corollary}
\newtheorem{proposition}[theorem]{Proposition}
\newtheorem{question}[theorem]{Question}
\newtheorem{lemma}[theorem]{Lemma}
\newtheorem{remark}[theorem]{{Remark}}
\newtheorem{definition}[theorem]{Definition}
\newtheorem{example}[theorem]{Example}
\providecommand{\w}{\omega}
\providecommand{\lm}{\lambda}
\providecommand{\rar}{\rightarrow}
\providecommand{\lrar}{\longrightarrow}
\providecommand{\inc}{\subseteq}
\renewcommand{\vec}[3]{\ensuremath{#1_#2, \ldots, #1_#3 }}
\renewcommand\char{\text{\rm char}}
\providecommand\Hom{\text{\rm Hom}}
\providecommand\im{\text{\rm im}}
\providecommand\ker{\text{\rm ker}}
\providecommand\Max{\text{\rm Max}}
\providecommand\min{\text{\rm min}}
\providecommand\ord{\text{\rm ord}}
\providecommand\soc{\text{\rm soc}}
\renewcommand\-{{\_\!\_}}
\providecommand{\sk}{{\ensuremath{\sf k }}}
\providecommand{\ma}{{\mathfrak a }}
\providecommand{\mb}{{\mathfrak b }}
\providecommand{\mc}{{\mathfrak c }}
\providecommand{\md}{{\mathfrak d }}
\providecommand{\m}{{\mathfrak m }}
\providecommand{\n}{{\mathfrak n }}
\begin{document}

\title{\large Computing Gorenstein Colength }
\author{H. Ananthnarayan}
\date{October 6, 2008}
\maketitle

\abstract{
\noindent
Given an Artinian local ring $R$, we define (in \cite{A}) its Gorenstein colength $g(R)$ to measure how closely we can approximate $R$ by a Gorenstein Artin local ring.  In this paper, we show that $R = T/\mb$ satisfies the inequality $g(R) \leq \lambda(R/\soc(R))$  in the following two cases: (a) $T$ is a power series ring over a field of characteristic zero and $\mb$ an ideal that is the power of a system of parameters or (b) $T$ is a 2-dimensional regular local ring with infinite residue field and $\mb$ is primary to the maximal ideal of $T$.

In the first case, we compute $g(R)$ by constructing a Gorenstein Artin local ring mapping onto $R$. We further use this construction to show that an ideal that is the $n$th power of a system of parameters is directly linked to the $(n-1)$st power via Gorenstein ideals. A similar method shows that such ideals are also directly linked to themselves via Gorenstein ideals.\\
Keywords: Gorenstein colength; Gorenstein linkage.
}

\section{\large Introduction}

Let us first recall the definition of Gorenstein colength and review some of its basic properties from \cite{A} in this section.

\begin{definition}\label{D0}{\rm 
Let $(R,\m,\sk)$ be an Artinian local ring. Define the Gorenstein colength of $R$, denoted $g(R)$ as:\\
$g(R) = \min\{\lambda(S) - \lambda(R) : S$ is a Gorenstein Artin local ring 
mapping onto $R\},$\\ where $\lambda(\-)$ denotes length. 
}\end{definition}

The main questions one would like to answer are the following: 

\begin{question}\label{MQ}\hfill{}\\{\rm
a) How does one intrinsically compute $g(R)$? 

\noindent
b) How does one construct a Gorenstein Artin local ring $S$ mapping onto $R$ such that $\lambda(S) - \lambda(R) = g(R)$?
}\end{question}

In order to answer Question \ref{MQ}(a), we prove the following inequalities in \cite{A}, which give bounds on $g(R)$.

\[
\begin{tabular}{|c|}
\hline
\\
$\lambda(R/(\omega^*(\omega))) \leq\hhalf \min\{\lambda(R/\ma): \ma \text{ is an ideal in } R, \ma \simeq \ma^\vee\}\hhalf \leq\hhalf  g(R)\hhalf \leq\hhalf  \lambda(R),$\\
\\
\text{\bf Fundamental Inequalities}
\\
\hline
\end{tabular}
\]
where $\w$ is the canonical module of $R$, $\w^*(\w) = \langle f(\w): f \in \Hom_R(\w,R) \rangle$ is the trace ideal of $\w$ in $R$ and $\ma^\vee = \Hom_R(\ma,\omega)$. 

A natural question one can ask in this context is Question 3.10 in \cite{A}, which is the following:

\begin{question}{\rm
Let ($R,\m,\sk)$ be an Artinian local ring and $\ma$ an ideal in $R$ such that $\ma \simeq \ma^\vee$. Does there exist a Gorenstein Artinian local ring $S$ mapping onto $R$, such that $\lm(S) - \lm(R) = \lm(R/\ma)$?
}\end{question}

The socle of $R$, \soc(R),  is a direct sum of finitely many copies of $\sk$, hence it is isomorphic to $\soc(R)^\vee$. Hence a particular case of the above question is the following:

\noindent
\begin{question}\label{Q1}{\rm
Is there a Gorenstein Artin local ring $S$ mapping onto $R$ such that $\lambda(S) -\lambda(R) = \lambda(R/\soc(R))$?
}\end{question}

A weaker question one can ask is the following:

\begin{question}\label{Q2}{\rm
Is $g(R) \leq \lm(R/\soc(R))$?
}\end{question}

We answer Question \ref{Q2} in two cases in this paper. In section 3,  we show that if $T$ is a power series ring over a field and $\md = (f_1,\ldots,f_d)$ is an ideal generated by a system of parameters, then $g(T/\md^n) \geq \lm(T/\md^{n-1})$. Further, if the residue field of $T$ has characteristic zero, we construct a Gorenstein Artin local ring $S$ mapping onto $T/\md^n$ such that $\lm(S) - \lm(T/\md^n) = \lm(T/\md^{n-1})$ using a theorem of L. Ried, L. Roberts and M. Roitman proved in \cite{RRR}. This shows that $g(T/\md^n) = \lm(T/\md^{n-1})$. In particular, this proves that $R = T/\md^n$ satisfies the inequality in Question \ref{Q2}.

In \cite{KMMNP}, Kleppe, Migliore, Miro-Roig, Nagel and Peterson show that $\md^n$ can be linked to $\md^{n-1}$ via Gorenstein ideals in 2 steps and hence to $\md$ in $2(n-1)$ steps. In section 4, we use the ideal corresponding to the Gorenstein ring constructed in section 3, to show that $\md^n$ can  be directly liked to $\md^{n-1}$ and hence to $\md$ in $(n-1)$ steps.

When $R$ is an Artinian quotient of a two-dimensional regular local ring with an infinite residue field, we use a  formula due to Hoskin and Deligne (Theorem \ref{HD}) in order to answer Question \ref{Q2} in section 5.

\section{\large Computing ${\mathbf \w^*(\w)}$}

Let $(R,\m,\sk)$ be an Artinian local ring with canonical module $\w$. As noted in \cite{A}, maps from $\w$ to $R$ play an important role in the study of Gorenstein colength. In this section, we prove a lemma which helps us compute the trace ideal $\w^*(\w)$ of $\w$ in $R$. We use the following notation in this section.\\

\noindent
{\bf Notation:} Let $(T,\m_T,\sk)$ be a regular local ring mapping onto $R$. Let $$0 \rar T^{b_d} \overset{\phi}\rar T^{b_{d-1}}\rar \ldots \rar T \rar R \rar 0 \quad\quad\quad(\sharp)$$ be a minimal resolution of $R$ over $T$. Then a resolution of the canonical module $\w$ of $R$ over $T$ is given by taking the dual of the above resolution, i.e., by applying $\Hom_T(\-,T)$ to the above resolution. Hence a presentation of $\w$ is $T^{b_{d-1}} \overset{\phi^*}\rar T^{b_d} \rar \w \rar 0$. Tensor with $R$ and apply $\Hom_R(-,R)$ to get an exact sequence $0 \lrar \w^* \lrar R^{b_d} \overset{\phi \otimes R}\lrar R^{b_{d-1}}$. Let $\w^*$ be generated minimally by $b_{d+1}$ elements. Thus we have an exact sequence $R^{b_{d+1}} \overset{\psi} \lrar R^{b_d} \overset{\phi \otimes R}\lrar R^{b_{d-1}}$, where $\w^* = \im(\psi)$.  

\begin{lemma}\label{L1}
With notation as above, let $\psi$ be given by the matrix $(a_{ij})$. Then the trace ideal of $\w$, $\w^*(\w)$, is the ideal generated by the $a_{ij}$'s.
\end{lemma}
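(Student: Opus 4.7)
My plan is to unravel the identifications set up in the Notation and reduce the statement to a direct calculation with explicit generators of $\w$ and $\w^*$.

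First I would fix notation for generators. From the presentation $T^{b_{d-1}} \overset{\phi^*}\rar T^{b_d} \rar \w \rar 0$, let $e_1, \ldots, e_{b_d}$ denote the images in $\w$ of the standard basis of $T^{b_d}$, so that $\w = Re_1 + \cdots + Re_{b_d}$. The displayed exact sequence $0 \lrar \w^* \lrar R^{b_d} \overset{\phi \otimes R}\lrar R^{b_{d-1}}$ identifies $\w^* = \Hom_R(\w,R)$ with $\ker(\phi \otimes R)$, and under this identification, a tuple $(r_1,\ldots,r_{b_d}) \in R^{b_d}$ corresponds to the homomorphism $f \colon \w \to R$ with $f(e_i) = r_i$. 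This correspondence is the content of the setup and is the one place where care is needed; it uses only left exactness of $\Hom_R(\-,R)$ together with the observation that the $T$-presentation of $\w$ descends to an $R$-presentation.

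Next I would translate the matrix $\psi$ into this language. Since $\w^* = \im(\psi)$, the columns of $\psi$ generate $\w^*$ as an $R$-module. Denoting the $j$th column by $(a_{1j}, \ldots, a_{b_d,j})^t$, we obtain generators $f_1, \ldots, f_{b_{d+1}}$ of $\w^*$, with the defining property $f_j(e_i) = a_{ij}$ for all $i, j$.

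Finally, I would conclude by a two-line argument. For any $f = \sum_j s_j f_j \in \w^*$ and any $x = \sum_i r_i e_i \in \w$, we have $f(x) = \sum_{i,j} s_j r_i\, a_{ij}$, so $\w^*(\w) \inc \langle a_{ij} : i,j \rangle$; conversely $a_{ij} = f_j(e_i) \in \w^*(\w)$, giving equality. The lemma is thus entirely a matter of bookkeeping: the only conceptual obstacle is verifying the identification of elements of $\ker(\phi \otimes R)$ with homomorphisms $\w \to R$ evaluated on the chosen generators $e_i$, and once that is in hand the ideal-theoretic statement is immediate.
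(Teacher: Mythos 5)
Your proof is correct and follows essentially the same approach as the paper: the paper proves the more general Lemma~\ref{L2} for an arbitrary finitely generated module $M$ (of which Lemma~\ref{L1} is the special case $M = \w$), identifying $M^*$ with tuples via the dual basis, reading off the generators $f_j$ from the columns of the presentation matrix, and noting $f_j(m_i) = a_{ij}$. Your direct argument for $\w$ is the same bookkeeping, just specialized and with the final inclusion made slightly more explicit.
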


The above lemma is a particular case of the following lemma. 

\begin{lemma}\label{L2}
Let $(R,\m,\sk)$ be a Noetherian local ring and $M$ a finitely generated $R$-module. Let $\ R^n \overset{B}\lrar R^m \lrar M \lrar 0$ be a minimal presentation of $M$. Apply $\Hom_R(\-,R)$ to get an exact sequence $0 \lrar M^* \lrar (R^*)^m \overset{B^*}\lrar (R^*)^n$. Map a free $R$-module, say $R^k$, minimally onto $M^*$ to get an exact sequence $R^k \overset{A}\lrar (R^*)^m \overset{B^*}\lrar (R^*)^n$, where $M^* = \ker(B^*) = \im(A)$. Then the trace ideal of $M$, $M^*(M) = (a_{ij}: a_{ij}\text{ are the entries of the matrix }A)$.
\end{lemma}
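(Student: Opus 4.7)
The plan is to unpack the definition of the trace ideal directly. By definition, $M^*(M)$ is the ideal generated by the elements $f(x)$ as $f$ ranges over $M^* = \Hom_R(M,R)$ and $x$ over $M$. The proof will amount to evaluating a generating set of $M^*$ on a generating set of $M$ and reading off the entries of $A$.

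First I would observe two reductions. Since $M$ is generated over $R$ by the images $\bar{e}_1, \ldots, \bar{e}_m$ of the standard basis of $R^m$ under the surjection $R^m \onto M$ coming from the presentation, we may restrict $x$ to range over these generators. And since $M^* = \im(A)$ is generated over $R$ by the images of the standard basis vectors of $R^k$, we may restrict $f$ to range over the generators $f_1,\ldots,f_k$ of $M^*$ corresponding to the columns of $A$. Thus
\[
M^*(M) = \bigl(\,f_j(\bar{e}_i) : 1\le i\le m,\ 1\le j\le k\,\bigr).
\]

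Next I would make the canonical identification $(R^*)^m = \Hom_R(R^m,R) \cong R^m$ given by $g \mapsto (g(e_1), \ldots, g(e_m))$, under which $B^*$ is simply the transpose of $B$. An element of $\ker(B^*) = M^*$ is a tuple $(r_1, \ldots, r_m) \in R^m$ representing a map $R^m \to R$ that kills $\im(B)$ and therefore factors through a unique $\bar{f} : M \to R$ sending $\bar{e}_i \mapsto r_i$. Applied to the $j$-th column $(a_{1j},\ldots,a_{mj})^T$ of $A$, this identification yields exactly the generator $f_j : M \to R$ of $M^*$ with $f_j(\bar{e}_i) = a_{ij}$.

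Combining the two steps, $M^*(M) = (f_j(\bar{e}_i)) = (a_{ij})$, as claimed. There is no genuine obstacle in the argument; the whole content is keeping the dualities straight, and in particular verifying that under the standard identification $(R^*)^m \cong R^m$ the columns of $A$ do correspond to the maps $f_j$ described above. Once that identification is in hand the conclusion is immediate, and the lemma then specializes to Lemma~\ref{L1} by taking $M = \w$ and using the presentation of $\w$ obtained by dualizing $(\sharp)$.
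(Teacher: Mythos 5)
Your proof is correct and takes essentially the same route as the paper: both identify $(R^*)^m$ with $R^m$ via the dual basis, recognize the columns of $A$ as the generators $f_j$ of $M^*$ (viewed as maps $M \to R$ via factoring through the presentation), and read off $f_j(\bar e_i) = a_{ij}$. The only difference is cosmetic — you spell out the reduction to generating sets explicitly, whereas the paper leaves it implicit.
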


\begin{proof}
Let $\vec{m}{1}{n}$ be a minimal generating set of $M$, $\vec{e}{1}{m}$ be a basis of $R^m$ such that $e_i \mapsto m_i$, and $\vec{e^*}{1}{m}$ be the corresponding dual basis of $(R^*)^m$. 

Let $f \in M^*$. Write $f = \Sigma_{i=1}^m r_i e^*_i \in (R^*)^m$. Then $f$ acts on $M$ by sending $m_j$ to $r_j$. Hence if $A = (a_{ij})$, then the generators of $M^*$ are $f_j = \Sigma_{i=1}^m a_{ij} e^*_i$, $1 \leq j \leq k$. Thus $f_j(m_i) = a_{ij}$. Thus $M^*(M) = (a_{ij})$.
\end{proof}

\begin{corollary}\label{C0}
With notation as above, let $(T',\m_{T'},\sk)$ be a regular local ring which is a flat extension of $T$ such that $\m_T T' \inc \m_{T'}$ and let $R' = T' \otimes_T R$. Then $\w_{R'}^*(\w_{R'}) = \w^*(\w)T'$.
\end{corollary}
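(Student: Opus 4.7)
The plan is to trace through the construction in the Notation box, applied to both $T \to R$ and $T' \to R'$, and show that each step of the construction commutes with base change from $T$ to $T'$.

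First I would verify that the minimal $T$-resolution $(\sharp)$ of $R$ remains a minimal $T'$-resolution of $R'$ after tensoring with $T'$. Exactness is preserved because $T'$ is flat over $T$. Minimality is preserved because the entries of the matrices representing the differentials in $(\sharp)$ lie in $\m_T$, and the hypothesis $\m_T T' \inc \m_{T'}$ ensures that these entries lie in $\m_{T'}$ after base change. In particular, the Betti numbers $b_i$ do not change, and the matrix representing $\phi$ has the same entries (now viewed in $T'$).

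Next I would show that forming $\w$, dualizing to $R$, and passing to $\w^*$ all commute with base change. Dualizing $(\sharp)$ gives a presentation of $\w$ whose matrix is $\phi^*$; tensoring with $T'$ yields a presentation of $\w_{R'}$ with matrix $\phi^*$ (entries unchanged). The exact sequence $0 \lrar \w^* \lrar R^{b_d} \overset{\phi \otimes R}\lrar R^{b_{d-1}}$ remains exact after applying $\-\otimes_R R'$, since $R' = T'\otimes_T R$ is flat over $R$ (flatness of $T'$ over $T$ base-changes to $R$). Moreover, flat base change for $\Hom$ (valid because $\w$ is finitely presented) gives $\w^*\otimes_R R' \cong \Hom_{R'}(\w\otimes_R R', R') = \w_{R'}^*$, so that the base-changed sequence really is the corresponding sequence for $R'$.

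It then remains to see that the matrix $\psi$ producing $\w^*$ base-changes to a matrix $\psi \otimes R'$ producing $\w_{R'}^*$ minimally. Since $R^{b_{d+1}} \overset{\psi}\lrar R^{b_d}$ surjects minimally onto $\w^*$, its entries lie in $\m$; after base change they lie in $\m R' \inc \m_{R'}$, so $\psi \otimes R'$ is a minimal presentation of $\w_{R'}^*$. The entries of this matrix are precisely the images in $R'$ of the $a_{ij}$. Applying Lemma \ref{L1} to $R'$ yields $\w_{R'}^*(\w_{R'}) = (a_{ij})R' = \w^*(\w)T'$, as required.

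The only substantive point is the commutation of dualization with base change that produces $\w^*\otimes_R R' \cong \w_{R'}^*$; everything else is bookkeeping with minimal resolutions under a local flat extension. Since $\w$ is finitely presented over the Noetherian ring $R$ and $R'$ is flat over $R$, this isomorphism is standard, so I do not anticipate a genuine obstacle.
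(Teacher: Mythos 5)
Your proof is correct and takes essentially the same approach as the paper: base change the minimal free resolution of $R$ (using flatness and $\m_T T' \inc \m_{T'}$), observe that dualization and the formation of $\w^*$ commute with the flat base change $R \to R'$, and apply Lemma \ref{L1} over $R'$. You are simply supplying the details (flat base change for $\Hom$, persistence of minimality of the cover of $\w^*$) that the paper's terse one-paragraph proof leaves implicit.
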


\begin{proof}
Since $T'$ is flat over $T$, $R' = T' \otimes_{T} R$ and $\m_T T' \inc \m_{T'}$, a minimal resolution of $R'$ over $T'$ is obtained by tensoring ($\sharp$) by $T'$ over $T$. Therefore $\w_{R'}^*(\w_{R'})$ is the ideal generated by the entries of the matrix $\psi \otimes_{T} T'$. Now, by Lemma \ref{L1}, the ideal in $R$ generated by the entries of $\psi$ is $\w^*(\w)$. Therefore, $\w_{R'}^*(\w_{R'}) = \w^*(\w) T'$.
\end{proof}

\section{\large Powers of Ideals Generated by a System of Parameters}

In this section, the main theorem we prove is the following:

\begin{theorem}\label{sop}
Let $T = \sk[|\vec{X}{1}{d}|]$ be a power series ring over a field $\sk$ of characteristic zero. Let $\vec{f}{1}{d}$ be a system of parameters in $T$ and $R = T/(\vec{f}{1}{d})^n$. Then $g(R) = \lm(T/(\vec{f}{1}{d})^{n-1})$. 
\end{theorem}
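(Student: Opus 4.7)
The theorem is the conjunction of two inequalities, $g(R) \geq \lambda(T/\md^{n-1})$ and $g(R) \leq \lambda(T/\md^{n-1})$; the characteristic-zero hypothesis is used only for the second.

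For the lower bound I would invoke the Fundamental Inequality $g(R) \geq \lambda(R/\w^*(\w))$ and verify that the trace ideal of $\w$ in $R$ is precisely the socle $\md^{n-1}/\md^n$, so that $\lambda(R/\w^*(\w)) = \lambda(T/\md^{n-1})$. First reduce to the case $\md = \m_T$: the subring $T_0 := \sk[[f_1,\ldots,f_d]]$ is a $d$-dimensional regular local ring, the inclusion $T_0 \hookrightarrow T$ is module-finite and flat with $\m_{T_0}T = \md$, and setting $R_0 = T_0/\m_{T_0}^n$ gives $R = T \otimes_{T_0} R_0$. Corollary~\ref{C0} then yields $\w_R^*(\w_R) = \w_{R_0}^*(\w_{R_0}) \cdot T$, so it is enough to compute the trace ideal for $R_0 = T_0/\m_{T_0}^n$. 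Applying Lemma~\ref{L1} to a minimal $T_0$-free resolution of $R_0$, a direct check on the terminal differential $\psi$ shows that all its entries lie in $\m_{T_0}^{n-1}$ and generate it modulo $\m_{T_0}^n$, so $\w_{R_0}^*(\w_{R_0}) = \m_{T_0}^{n-1}/\m_{T_0}^n$, and base-changing recovers $\w_R^*(\w_R) = \md^{n-1}/\md^n$.

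For the upper bound I would construct a Gorenstein Artinian local ring $S$ mapping onto $R$ with $\lambda(S) - \lambda(R) = \lambda(T/\md^{n-1})$. Via Macaulay's inverse system, the task becomes: find a polynomial $F$ in the dual polynomial ring $\sk[Y_1,\ldots,Y_d]$ (with $T$ acting by contraction) such that $\ann_T(F) \subseteq \md^n$ and $\dim_{\sk}(T \cdot F) = \lambda(T/\md^n) + \lambda(T/\md^{n-1})$. The natural candidate has socle degree $2(n-1)$ and should realise the \emph{compressed} Gorenstein algebra whose Hilbert function is symmetric about $n-1$ and matches that of $T/\md^n$ in degrees $< n$; summing this Hilbert function gives exactly the required length, while the agreement in low degrees automatically forces $\ann_T(F) \subseteq \md^n$. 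The theorem of Ried, Roberts and Roitman from~\cite{RRR}, which holds in characteristic zero, is invoked to furnish the concrete polynomial $F$ (a suitable expression in the variables dual to $f_1,\ldots,f_d$) and to guarantee that $T \cdot F$ has precisely the stated Hilbert function.

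The principal obstacle is ensuring that the polynomial $F$ supplied by~\cite{RRR} has the precise annihilator structure required---specifically the expected symmetric Hilbert function and the containment $\ann_T(F) \subseteq \md^n$---and it is exactly here that the characteristic-zero hypothesis is indispensable. The computation of $\w^*(\w)$ in the lower-bound step is, by contrast, a routine application of Lemma~\ref{L1} once the reduction through Corollary~\ref{C0} has been carried out.
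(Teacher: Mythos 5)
Your decomposition matches the paper's exactly — split into $g(R)\geq\lm(T/\md^{n-1})$ via the fundamental inequality after computing $\w^*(\w)$, and $g(R)\leq\lm(T/\md^{n-1})$ by an explicit Gorenstein $S$, reducing each from $\md$ to the maximal ideal by flatness over $T_0=\sk[[f_1,\ldots,f_d]]$ using Corollary~\ref{C0} and Remark~\ref{R}. However, the proposal is vague at precisely the two places where the paper has to do real work, and at both points you misdescribe the mechanism.

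\emph{Lower bound.} You claim that computing $\w_{R_0}^*(\w_{R_0})$ for $R_0=T_0/\m_{T_0}^n$ is ``a routine application of Lemma~\ref{L1}'' and a ``direct check on the terminal differential $\psi$.'' This is not how the paper proceeds, and it is not direct. The map $\psi$ in Lemma~\ref{L1} is not a differential of the resolution of $R_0$; it is a minimal presentation of $\w^*=\ker(\phi\otimes R)$, and exhibiting that kernel for the resolution of $T_0/\m_{T_0}^n$ (some version of the Eagon--Northcott/Buchsbaum--Rim complex for $d\geq 3$) is not a routine check. In the paper, Lemma~\ref{L1} is invoked only to prove the base-change identity (Corollary~\ref{C0}). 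The actual computation of $\w^*(\w)$ for $\m^n$ is done in Theorem~\ref{max2} by an entirely different route: realize $\w$ inside the inverse-polynomial module $\sk[X_1^{-1},\ldots,X_d^{-1}]$, identify the generating monomials $X_1^{-a_1}\cdots X_d^{-a_d}$ with $\sum a_i=n-1$, and run an induction on $a_1$ to show any $\phi\in\Hom(\w,R)$ lands in $\soc(R)=(0:_R X_1)$. You need some argument of this sort; the resolution-based one is not supplied.

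\emph{Upper bound.} The overall strategy (produce a compressed Gorenstein algebra of socle degree $2(n-1)$ via Macaulay duality) is the same in spirit as the paper, but your account of the role of the theorem of Reid, Roberts and Roitman is inaccurate. That theorem is a degree lower bound, not an existence result, and it does not ``furnish the concrete polynomial $F$.'' The paper makes an explicit choice: $\mc_n=(X_1^n,\ldots,X_d^n):_T\ell^{(d-2)(n-1)}$ with $\ell=X_1+\cdots+X_d$, which is Gorenstein because it is a link of the complete intersection $(X_1^n,\ldots,X_d^n)$; RRR is then used precisely to prove $\mc_n\subseteq\m_T^n$, after which the Gorenstein symmetry of the Hilbert function (Proposition~\ref{P0.5}) delivers $\lm(\m_T^n/\mc_n)=\lm(T/\m_T^{n-1})$. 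Your sentence ``the agreement in low degrees automatically forces $\ann_T(F)\subseteq\md^n$'' has the implication reversed: the containment is what forces the agreement of Hilbert functions in degrees $<n$, and establishing that containment is exactly where RRR (and hence characteristic zero) enters.
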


In order to prove this, we first prove the theorem when $f_i = X_i, i = 1,\ldots, d,$ and then use the fact that $T$ is flat over $T' = \sk[|\vec{f}{1}{d}|]$.

\begin{theorem}\label{max2}
Let $T = \sk[|\vec{X}{1}{d}|]$ be a power series ring over a field $\sk$ with maximal ideal $\m_T = (\vec{X}{1}{d})$. Let $R = T/\m_T^n$ and $\w$ be the canonical module of $R$. Then $\w^*(\w) = \soc(R) = \m_T^{n-1}/\m_T^n$.
\end{theorem}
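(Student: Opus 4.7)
The plan is to apply Lemma \ref{L1}: identify $\w^* = \ker(\phi \otimes R)$ explicitly, where $\phi\colon T^{b_d} \to T^{b_{d-1}}$ is the last differential in the minimal $T$-free resolution $(\sharp)$ of $R = T/\m_T^n$, and then read off the entries of a minimal presenting matrix $\psi$ of $\w^*$. The crucial structural input is that $\m_T^n$ admits a linear resolution: while the first differential $T^{b_1} \to T$ is given by the $\binom{n+d-1}{d-1}$ degree-$n$ monomial generators of $\m_T^n$, every subsequent differential in the minimal resolution has entries of $\m_T$-adic order exactly $1$. In particular every nonzero entry of $\phi$ lies in $\m_T \setminus \m_T^2$, and each summand of $T^{b_d}$ contributes a generator in the single degree $n + d - 1$.

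Granting this, I claim $\w^* = (\m_T^{n-1}/\m_T^n)^{b_d}$. The inclusion $(\m_T^{n-1}/\m_T^n)^{b_d} \inc \ker(\phi \otimes R)$ is immediate: each entry of $\phi$ lies in $\m_T$, so $\phi$ sends $\m_T^{n-1} T^{b_d}$ into $\m_T^n T^{b_{d-1}}$. For the reverse inclusion, lift $\bar v \in \ker(\phi \otimes R)$ to $v \in T^{b_d}$ and decompose $v = \sum_{\alpha \geq 0} v_\alpha$ by $\m_T$-adic order into homogeneous components. The condition $\phi(v) \in \m_T^n T^{b_{d-1}}$, read degree by degree, gives $\phi(v_\alpha) = 0$ in $T^{b_{d-1}}$ for each $\alpha < n - 1$. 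Since $\phi$ is injective (being the leftmost map of the exact resolution $(\sharp)$), this forces $v_\alpha = 0$ for $\alpha < n - 1$, so $v \in \m_T^{n-1} T^{b_d}$.

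To conclude, $\m_T^{n-1}/\m_T^n$ is minimally generated as an $R$-module by the $\binom{n+d-2}{d-1}$ monomials of degree $n-1$. A minimal presenting matrix $\psi\colon R^{b_{d+1}} \to R^{b_d}$ of $\w^* \cong (\m_T^{n-1}/\m_T^n)^{b_d}$ can therefore be chosen so that its nonzero entries are precisely these degree-$(n-1)$ monomials, each appearing in a single row. By Lemma \ref{L1}, $\w^*(\w)$ is the ideal generated by the entries of $\psi$, which is exactly $\m_T^{n-1}/\m_T^n = \soc(R)$.

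The main obstacle is establishing the linearity of $\phi$, equivalently, the linear-resolution property of $\m_T^n$; this is a classical fact, provable via an explicit complex (Eagon--Northcott or Schur-type) or by direct computation of graded Betti numbers. Once linearity is in hand, the injectivity-and-grading argument above finishes the proof quickly.
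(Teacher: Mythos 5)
Your proof is correct, but it takes a genuinely different route from the paper's. The paper's proof bypasses the free resolution entirely: it realizes $\w$ concretely inside the inverse polynomial module $E_T(\sk) = \sk[X_1^{-1},\ldots,X_d^{-1}]$ as $(0:_{E_T(\sk)}\m_T^n)$, observes that $\w$ is $R$-generated by the monomials $X_1^{-a_1}\cdots X_d^{-a_d}$ with $\sum a_i = n-1$, and then shows by a short induction (on $a_1$, passing a relation like $X_1 w = X_2 w'$ across $\phi$) that any $\phi \in \Hom_R(\w,R)$ lands in $(0:_R X_1) = \soc(R)$. Your proof instead invokes Lemma~\ref{L1} and leans on the linear resolution of $\m_T^n$: you identify $\w^* = \ker(\phi\otimes R)$ with $(\m_T^{n-1}/\m_T^n)^{b_d}$ via a degree-by-degree argument using the injectivity of the last differential, and read off the trace ideal from a minimal presenting matrix whose nonzero entries are the degree-$(n-1)$ monomials. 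What your route buys is a uniform use of the machinery already set up in section 2 (and it makes visible \emph{why} $\w^*$ is what it is in resolution-theoretic terms); what it costs is the extra classical input that $\m_T^n$ has a linear minimal resolution (Eliahou--Kervaire or Eagon--Northcott), plus an implicit reduction from the power-series ring to the graded polynomial ring so that ``homogeneous components'' and the degree filtration make literal sense --- a reduction the paper makes explicit, and which you should state as well. One small wording nit: your phrase ``each appearing in a single row'' should say that each \emph{column} of $\psi$ has exactly one nonzero entry, so that $\im(\psi)$ is the direct sum $(\m_T^{n-1}/\m_T^n)^{b_d}$ and the entries generate exactly $\m_T^{n-1}/\m_T^n$.
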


\begin{proof}
In order to prove this, we show that if $\phi \in \Hom(\w,R)$, then $\phi(\w) \inc \soc(R)$. Since $\soc(R) \inc \w^*(\w)$, this will prove the theorem. 

Note that we can consider $R$ to be the quotient of the polynomial ring $\sk[\vec{X}{1}{d}]$ by $(\vec{X}{1}{d})^n$. Thus change notation so that $T = \sk[\vec{X}{1}{d}]$ and $\m_T = (\vec{X}{1}{d})$ is its unique homogenous maximal ideal. 

The injective hull of $\sk$ over $T$, $E_T(\sk)$, is $\sk[\vec{X^{-1}}{1}{d}]$, where the multiplication is defined by \[(X_1^{a_1}\cdots X_d^{a_d}) \cdot (X_1^{-b_1}\cdots X_d^{-b_d}) = \left\{\begin{array}{cc} X_1^{a_1 - b_1} \cdots X_d^{a_d - b_d} & \text{ if }a_i \leq b_i \text{ for all }i\\ 0& \text{ otherwise}\end{array} \right.\]  and extended linearly (e.g., see \cite{N}).

Let $\mb = \m_T^n$. The canonical module $\w$ of $R$ is isomorphic to the injective hull of the residure field of $R$. Hence $\w \simeq \Hom_R(R,E_T(\sk)) \simeq (0 :_{\sk[\vec{X^{-1}}{1}{d}]} \mb)$. Note that $\mb \cdot (X_1^{-a_1}\cdots X_d^{-a_d}) = 0$ whenever $a_i \geq 0$ and $n > \sum a_i$. Since $\lm(\w) = \lm(R)$, we conclude that $$\w \simeq \sk\text{-span of }\left\{X_1^{-a_1}\cdots X_d^{-a_d}: a_i \geq 0; n > \sum_{i=1}^d a_i\right \}.$$
Observe that $\w$ is generated by $\{X_1^{-a_1}\cdots X_d^{-a_d}$: $\sum_{i=1}^d a_i = n-1\}$ as an $R$-module. Let $\phi \in \w^*$. We will now show that $\phi(X_1^{-a_1}\cdots X_d^{-a_d}) \in \soc(R)$ by induction on $a_1$. Let $w = X_1^{-a_1}\cdots X_d^{-a_d}$, $\sum_{i=1}^d a_i = n-1$. 

If $a_1 = 0$, then $X_1 \cdot w = 0$. Hence $\phi(w) \in (0:_R X_1) = \soc(R)$. If not, then $X_1 w = X_2 (X_1^{-(a_1 - 1)}X_2^{-(a_2+1)}\cdots X_d^{-a_d})$. We have $\phi(X_1^{-(a_1 - 1)}X_2^{-(a_2+1)}\cdots X_d^{-a_d})$ $\in \soc(R)$ by induction. Thus $X_2 \phi(X_1^{-(a_1 - 1)}X_2^{-(a_2+1)}\cdots X_d^{-a_d}) = 0$ which yields $X_1 \phi(w) = 0$. But $(0 :_R X_1) = \soc(R)$, which proves that $\phi(\w)\inc \soc(R)$.
\end{proof}

Since we know that $\lm(R/(\w^*(\w)) \leq g(R)$ by the fundamental inequalities, we immediately get the following:

\begin{corollary}\label{C1}
With notation as in Thoerem \ref{max2}, $g(R) \geq \lm(R/\soc(R))$.
\end{corollary}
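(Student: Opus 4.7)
The plan is essentially a one-line chase through definitions once Theorem \ref{max2} is in hand. The fundamental inequalities displayed in the introduction give the chain
\[
\lm(R/\w^*(\w)) \ \leq\ \min\{\lm(R/\ma) : \ma \simeq \ma^\vee\} \ \leq\ g(R),
\]
so to deduce $g(R) \geq \lm(R/\soc(R))$ it suffices to identify the leftmost quantity with $\lm(R/\soc(R))$.

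First I would invoke Theorem \ref{max2}, which asserts exactly that for $R = T/\m_T^n$ the trace ideal $\w^*(\w)$ coincides with $\soc(R) = \m_T^{n-1}/\m_T^n$. Substituting this equality into the first term of the fundamental inequalities yields
\[
\lm(R/\soc(R)) \ =\ \lm(R/\w^*(\w)) \ \leq\ g(R),
\]
which is the assertion of the corollary. No auxiliary construction or further module-theoretic argument is required at this stage because all the structural work has already been absorbed into Theorem \ref{max2}.

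There is really no obstacle here: the corollary is a bookkeeping step whose sole purpose is to record the lower bound on $g(R)$ that will be matched by an explicit Gorenstein Artin local ring mapping onto $R$ later in the section. The substantive content lies in Theorem \ref{max2} (the reverse containment $\w^*(\w) \inc \soc(R)$, proved via the injective hull calculation and induction on the exponent $a_1$) and in the fundamental inequalities recalled from \cite{A}; the corollary merely splices them together.
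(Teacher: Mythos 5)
Your proof is correct and follows exactly the route the paper takes: substitute $\w^*(\w) = \soc(R)$ from Theorem \ref{max2} into the fundamental inequality $\lm(R/\w^*(\w)) \leq g(R)$. The paper indeed treats this as an immediate consequence, presenting it in the one sentence preceding the corollary statement.
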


We prove the reverse inequality in Theorem \ref{max1} by constructing a Gorenstein Artin ring $S$ mapping onto $R$ such that $\lm(S) - \lm(R) = \lm(R/\soc(R))$. The following theorem of Ried, Roberts and Roitman is used in the construction.

\begin{theorem}[Reid,Roberts,Roitman]\label{R3}
Let $\sk$ be a field of characteristic zero, $S = \sk[\vec{X}{1}{d}]/(X_1^{n_1},\ldots,X_d^{n_d}) = \sk[\vec{x}{1}{d}]$. Let $m \geq 1$ and $f$ be a nonzero homogeneous element in $S$ such that $(x_1  + \cdots + x_d)^mf = 0$. Then $\deg(f) \geq (t - m + 1)/2$, where $t = \sum_{i=1}^d(n_i - 1)$.
\end{theorem}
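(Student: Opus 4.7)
The plan is to exploit the $\mathfrak{sl}_2$-action on $S$ available because $\char(\sk) = 0$; the required inequality then falls out of a short weight calculation. Each factor $A_i = \sk[x_i]/(x_i^{n_i})$ carries a standard $\mathfrak{sl}_2$-triple $(E_i, F_i, H_i)$ in which $E_i$ is multiplication by $x_i$, $H_i$ acts on $x_i^k$ by the scalar $2k - (n_i - 1)$, and $F_i$ is the lowering operator (whose coefficients involve denominators, forcing the characteristic hypothesis). Tensoring gives $S \cong A_1 \otimes \cdots \otimes A_d$ the structure of a finite-dimensional $\mathfrak{sl}_2$-module whose raising operator $E = \sum_i E_i$ is multiplication by $\ell = x_1 + \cdots + x_d$, and in which a homogeneous element of degree $j$ has weight $2j - t$ (since $H = \sum H_i$ acts on $x_1^{a_1} \cdots x_d^{a_d}$ by $\sum_i \bigl(2a_i - (n_i - 1)\bigr) = 2j - t$).

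From here the bound is almost immediate. Decompose $S$ into irreducible $\mathfrak{sl}_2$-summands. In an irreducible $V$ of highest weight $n_V$ the weights are $\{-n_V, -n_V + 2, \ldots, n_V\}$, each with one-dimensional weight space, and $E$ shifts weight by $+2$; so $E^m$ is injective on the weight-$w$ piece of $V$ precisely when $w + 2m \leq n_V$. Suppose $f \in S_j$ is nonzero of weight $w = 2j - t$ with $E^m f = 0$. Then $f$ has a nonzero component in some irreducible $V$ where (i) $V$ contains the weight $w$, so by the symmetry of its weights also the weight $-w$, forcing $n_V \geq -w$, and (ii) injectivity of $E^m$ on the weight-$w$ piece fails, so $n_V < w + 2m$. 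Combining these, $-w < w + 2m$, i.e., $w > -m$. Substituting $w = 2j - t$ and using integrality of $j$ gives $2j + m \geq t + 1$, hence $\deg(f) = j \geq (t - m + 1)/2$.

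The main obstacle is the construction of the $\mathfrak{sl}_2$-action on each $A_i$, or equivalently the verification that $\ell$ is a strong Lefschetz element for $S$. The lowering operator is essentially $F_i(x_i^k) = k(n_i - k)\, x_i^{k-1}$ up to normalization, whose definition requires inverting positive integers; this is precisely where $\char(\sk) = 0$ is used, and indeed the theorem can fail in positive characteristic. An alternative route to the same input is the Hard Lefschetz theorem applied to $\mathbb{P}^{n_1 - 1} \times \cdots \times \mathbb{P}^{n_d - 1}$, whose rational cohomology (with grading halved) is $S$ with K\"ahler class $\ell$; granting that, the deduction above goes through unchanged.
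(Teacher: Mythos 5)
The paper does not actually prove this theorem: it is quoted from Reid--Roberts--Roitman \cite{RRR} and used as a black box, so there is no in-paper argument to compare against. What you have supplied is a complete, self-contained proof via $\mathfrak{sl}_2$-representation theory, and it is correct. Your route is the one associated with Watanabe (and, in its geometric guise via Hard Lefschetz on products of projective spaces, with Stanley); the original Reid--Roberts--Roitman argument is more combinatorial and does not pass through $\mathfrak{sl}_2$-theory. The structure of your argument is sound: the operators $E_i = x_i\cdot(-)$, $H_i$ diagonal with eigenvalue $2k-(n_i-1)$ on $x_i^k$, and $F_i(x_i^k)=k(n_i-k)x_i^{k-1}$ make each $A_i$ an irreducible $(n_i)$-dimensional $\mathfrak{sl}_2$-module; the tensor product makes $S$ an $\mathfrak{sl}_2$-module with $E=\ell\cdot(-)$ and with $S_j$ the weight-$(2j-t)$ space; complete reducibility then yields an $E$-equivariant decomposition into irreducibles, and the weight bookkeeping in a single irreducible summand on which your $f$ has nonzero component gives $2j-t>-m$, hence $j\geq(t-m+1)/2$ by integrality.

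Two small remarks. First, your stated reason for the characteristic-zero hypothesis (``coefficients involve denominators'') is slightly off the mark: in the normalization $F_i(x_i^k)=k(n_i-k)x_i^{k-1}$ the structure constants are integers. What actually uses $\char(\sk)=0$ is (a) that these integers are nonzero for $0<k<n_i$, so each $A_i$ really is irreducible, and (b) Weyl's complete reducibility theorem, which is where your decomposition of $S$ into irreducible summands comes from; both fail in small positive characteristic, which is why the theorem does. Second, a cosmetic point: when you invoke failure of injectivity of $E^m$ on the weight-$w$ piece of $V$, it is worth saying explicitly that because the direct-sum decomposition is $\mathfrak{sl}_2$-equivariant, $E^m f=0$ forces $E^m f_\alpha=0$ on each component $f_\alpha$, and some $f_\alpha\neq 0$; you use this implicitly and it is the only place where nondegeneracy of $f$ enters.
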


\noindent
We use the following notation in this section.

\noindent
{\bf Notation:} Let $\sk$ be a field. For any graded ring $S$ (with $S_0 = \sk$), by $h_S(i)$ we mean the $\sk$-dimension of the $i$th graded piece of the ring $S$ and if $S$ is Artinian, $\Max(S) := \max\{i:h_S(i) \neq 0\}$. All $\sk$-algebras in this section are standard graded, i.e., they are generated as a $\sk$-algebra by elements of degree 1. \\

\noindent
We also need the following basic fact in order to prove Theorem \ref{max1}.

\begin{remark}\label{R0}{\rm
Let $S = \sk[X_1,\ldots,X_d]/(X_1^{n_1}, \ldots, X_d^{n_d})$ be a quotient of the polynomial ring over a field $\sk$ and $f$ be a non-zero homogeneous element in $S$ of degree $s$. Then $S/(0:_Sf)$ is Gorenstein and $\Max(S/(0:_Sf)) = \Max(S) - s$. 
}\end{remark}

\begin{proposition}\label{P0.5}
Let $T = \sk[\vec{X}{1}{d}]$ be a polynomial ring over $\sk$ and $\m_T = (\vec{X}{1}{d})$ be its unique homogeneous maximal ideal. Let $f$ be a homogeneous element and $\mc = (\vec{X^n}{1}{d}):_Tf$ be such that $\mc \inc \m^n$. Then the following are equivalent:\\
{\rm i)} $\lm(\m_T^n/\mc) = \lm(T/\m_T^{n-1})$. \\
{\rm ii)} $\Max(T/\mc) = 2(n-1)$.\\
{\rm iii)} $\deg(f) = (d-2)(n-1)$.
\end{proposition}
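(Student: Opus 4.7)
The plan is to handle (ii) $\Leftrightarrow$ (iii) as an immediate application of Remark \ref{R0}, and then to prove (i) $\Leftrightarrow$ (ii) by a Hilbert-function argument built on Gorenstein symmetry.

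For (ii) $\Leftrightarrow$ (iii), I would set $S = T/(X_1^n, \ldots, X_d^n)$ and observe that $S$ is a complete intersection with socle degree $\Max(S) = d(n-1)$. The hypothesis $\mc \inc \m_T^n$ is a proper containment, so $f$ is nonzero in $S$, and the standard identification $T/\mc \simeq S/(0 :_S f)$ holds. Applying Remark \ref{R0} to $S$ and $f$ gives $\Max(T/\mc) = d(n-1) - \deg(f)$, and the equivalence of (ii) and (iii) is immediate by solving $d(n-1) - \deg(f) = 2(n-1)$.

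For (i) $\Leftrightarrow$ (ii), I would first use additivity of length in the short exact sequence $0 \lrar \m_T^n/\mc \lrar T/\mc \lrar T/\m_T^n \lrar 0$ to rewrite (i) as $\lm(T/\mc) = \lm(T/\m_T^n) + \lm(T/\m_T^{n-1})$. Let $D = \Max(T/\mc)$ and write $h_i$ for $h_{T/\mc}(i)$. Two features of $h_\bullet$ drive the argument: (a) since $\mc$ is homogeneous (as $f$ is) and contained in $\m_T^n$, its graded pieces vanish in degrees $< n$, giving $h_i = h_T(i)$ for $i < n$; and (b) since $T/\mc$ is Gorenstein by Remark \ref{R0}, the Hilbert function satisfies $h_i = h_{D-i}$. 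Splitting $\lm(T/\mc) = \sum_{i=0}^{n-1} h_T(i) + \sum_{i=n}^D h_i$ and applying the symmetry to the second sum converts it to $\sum_{k=0}^{D-n} h_k$.

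The argument then splits according to whether $D \leq 2n-1$ or $D \geq 2n$. In the former range, $D-n \leq n-1$, so feature (a) gives $h_k = h_T(k)$ for every $k$ appearing in the sum, whence $\sum_{k=0}^{D-n} h_k = \lm(T/\m_T^{D-n+1})$; condition (i) then becomes $\lm(T/\m_T^{D-n+1}) = \lm(T/\m_T^{n-1})$, which forces $D = 2(n-1)$. In the latter range, $D-n \geq n$ forces $\sum_{k=0}^{D-n} h_k \geq \sum_{k=0}^{n-1} h_T(k) = \lm(T/\m_T^n) > \lm(T/\m_T^{n-1})$, contradicting (i). The main piece of care is the bookkeeping between these two ranges of $D$, but once Gorenstein symmetry is invoked this reduces to elementary comparison of partial sums of $h_T$, and no ingredient beyond Remark \ref{R0} and symmetry of Gorenstein Hilbert functions is needed.
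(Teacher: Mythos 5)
Your proof is correct and follows essentially the same route as the paper's: dispose of (ii) $\Leftrightarrow$ (iii) via Remark \ref{R0}, then compare Hilbert functions of $T/\mc$ and $T/\m_T^n$ using the two facts that $h_{T/\mc}(i) = h_T(i)$ for $i < n$ (from $\mc \inc \m_T^n$) and that $h_{T/\mc}$ is symmetric about $\Max(T/\mc)/2$ (from Gorensteinness). The paper presents this via a displayed Hilbert-function table and derives an unconditional inequality $\lm(T/\m_T^{n-1}) \leq \lm(\m_T^n/\mc)$ with equality precisely when $\Max(T/\mc) = 2(n-1)$; your version packages the same comparison as a clean case split on $D = \Max(T/\mc)$, which makes explicit the bookkeeping that the table-based argument handles implicitly (in particular the fact that $D \geq 2(n-1)$ is forced, which the paper does not spell out). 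This is a presentational difference, not a different method.
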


\begin{proof}
Since $\Max(T/(\vec{X^n}{1}{d})) = d(n-1)$, (ii) $\Leftrightarrow$ (iii) follows from Remark \ref{R0}.

Let $R = T/\m_T^n$ and $S = T/\mc$. Since $T/(\vec{X^n}{1}{d})$ is a Gorenstein Artin local ring, so is $S$. Note that $\soc(R) = \m_T^{n-1}/\m_T^n$ and $\lm(S) - \lm(R) = \lm(\m_T^n/\mc)$. 

The rings $R$ and $S$ are quotients of the polynomial ring $\sk[\vec{X}{1}{d}]$ by homogeneous ideals. Thus, both $R$ and $S$ are graded under the standard grading. Since $\mc \inc \m_T^n$, $$h_S(i) = h_R(i)\text{ for }i < n.\quad\quad\quad(\ast)$$

Since $S$ is Gorenstein, $$h_S(i) = h_S(\Max(S) - i).\quad\quad\quad(\ast\ast)$$

Using ($\ast$) and ($\ast\ast$), we see that the Hilbert function of $S$ is:

\begin{center}
\begin{tabular}{|l||c|c|c|c|c|c|}\hline
degree i & 0 & 1 & 2 & 3 & \ldots & n-1 \\ \hline 
$h_R(i)$ & 1 & d & $\left( \begin{array}{c} d + 1 \\ 2\end{array}\right)$ & $\left( \begin{array}{c} d + 2 \\ 3\end{array}\right)$ & \ldots & $\left( \begin{array}{c} d + n - 2 \\ n-1\end{array}\right)$\\ \hline 
$h_S(i)$ & 1 & d & $\left( \begin{array}{c} d + 1 \\ 2\end{array}\right)$ & $\left( \begin{array}{c} d + 2 \\ 3\end{array}\right)$ & \ldots & $\left( \begin{array}{c} d + n - 2 \\ n-1\end{array}\right)$ \\\hline 
\end{tabular}\\\end{center}

\begin{center}
\begin{tabular}{|l||c|c|c|c|c|c|}\hline
degree i &\ldots & \Max(S) - (n-1) & \Max(S) - (n-2)  &\ldots & \Max(S) - 1 & \Max(S)\\ \hline 
$h_R(i)$ & \ldots & 0 & 0 & \ldots & 0 & 0 \\ \hline 
$h_S(i)$ & \ldots & $\left( \begin{array}{c} d + n - 2 \\ n-1\end{array}\right)$  
& $\left( \begin{array}{c} d + n - 3 \\ n-2\end{array}\right)$  & \ldots & d & 1\\ \hline 
\end{tabular}\\\end{center}
Thus we have
\[
\begin{array}{rl}
\lm(T/\m_T^{n-1}) &=  h_R(n-2) + h_R(n-3) + \ldots + h_R(0)\\ & = h_S(n-2) + h_S(n-3) + \ldots + h_S(0)\\ & = h_S(\Max(S) - (n-2)) + h_S(\Max(S) - (n-3)) + \ldots + h_S(\Max(S))\\ &= \sum_{i \geq \Max(S) -(n-2) }h_S(i)\\ &\leq \lm(S) - \lm(R) = \lm(\m_T^n/\mc). 
\end{array}
\] 
Moreover, from the above table, equality holds if and only if $\Max(S) - (n-1) = n-1$, proving (i) $\Leftrightarrow$ (ii). 
\end{proof}

In the following corollary, we show that $f = (X_1 + \cdots + X_d)^{(d-2)(n-1)}$ satisifies the hypothesis of Proposition \ref{P0.5}.

\begin{corollary}\label{P1}
Let $T = \sk[\vec{X}{1}{d}]$ be a polynomial ring over $\sk$, a field of characteristic zero, and $\m_T = (\vec{X}{1}{d})$ be its unique homogeneous maximal ideal. Let $\mc_n = (\vec{X^n}{1}{d}):l^{(d-2)(n-1)}$, where $l = X_1 + \cdots + X_d$. Then $\mc_n \inc \m_T^n$.   

Moreover, $\lm(\m_T^n/ \mc_n) = \lm(T/ \m_T^{n-1})$.
\end{corollary}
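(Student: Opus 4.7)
The plan is to prove the containment $\mc_n \inc \m_T^n$ first, and then invoke Proposition~\ref{P0.5} with $f = l^{(d-2)(n-1)}$ to deduce the length equality. The second step is essentially free: once the containment is in hand, $f = l^{(d-2)(n-1)}$ is homogeneous of degree $(d-2)(n-1)$, so condition (iii) of Proposition~\ref{P0.5} holds and the equivalence with (i) yields $\lm(\m_T^n/\mc_n) = \lm(T/\m_T^{n-1})$. So the only real work is the containment.

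To prove $\mc_n \inc \m_T^n$, I would first note that $\mc_n$ is homogeneous (since both $(X_1^n,\ldots,X_d^n)$ and $l^{(d-2)(n-1)}$ are), so it suffices to check the containment on a homogeneous element $f \in \mc_n$. If $f \in (X_1^n,\ldots,X_d^n)$, then trivially $f \in \m_T^n$. Otherwise, the image $\bar f \in S := T/(X_1^n,\ldots,X_d^n)$ is a nonzero homogeneous element satisfying $l^{(d-2)(n-1)} \bar f = 0$. Now apply the Reid--Roberts--Roitman Theorem~\ref{R3} with $n_i = n$ (so $t = d(n-1)$) and $m = (d-2)(n-1)$. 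This gives
\[
\deg(f) = \deg(\bar f) \geq \frac{t - m + 1}{2} = \frac{d(n-1) - (d-2)(n-1) + 1}{2} = \frac{2n-1}{2} = n - \tfrac{1}{2}.
\]
Since $\deg(f)$ is an integer, $\deg(f) \geq n$, and therefore $f \in \m_T^n$. This establishes $\mc_n \inc \m_T^n$.

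There is no real obstacle; the proof is a clean application of the Reid--Roberts--Roitman bound, whose statement was tailored precisely to force the degree estimate to land just above $n-1$. The only subtle point to double-check is that Theorem~\ref{R3} genuinely applies to the image $\bar f$ in $S$ (it does, since $\bar f$ is a nonzero homogeneous element of $S$ annihilated by a power of $l$), and that the colon ideal remains homogeneous so it is enough to verify containment for homogeneous elements.
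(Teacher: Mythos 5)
Your proof is correct and follows essentially the same route as the paper: apply the Reid--Roberts--Roitman bound with $m=(d-2)(n-1)$ to get $\deg(f)\geq n-\tfrac12$, hence $\deg(f)\geq n$, and then invoke Proposition~\ref{P0.5}(iii)$\Rightarrow$(i). The only difference is that you spell out the (harmless) bookkeeping of homogeneity and the passage to the quotient $S$ a bit more explicitly than the paper does.
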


\begin{proof}
By Theorem \ref{R3}, if $F$ is a homogeneous element in $T$ such that $l^m F \in (X_1^n,\ldots,X_d^n)$, then $\deg(F) \geq (d(n-1) - m + 1)/2$. Therefore, for $m = (d-2)(n-1)$, we see that $\deg(F) \geq n-1/2$, i.e., $F \in \m_T^n$. Thus $(\vec{X^n}{1}{d}):(X_1 + \cdots X_d)^{(d-2)(n-1)} \inc \m_T^n$.  

Moreover, by Proposition \ref{P0.5}, since $\deg(l^{(d-2)(n-1)}) = (d-2)(n-1)$, $\lm(\m_T^n/ \mc_n) = \lm(T/ \m_T^{n-1})$.
\end{proof}

\begin{theorem}\label{max1}
Let $T = \sk[|\vec{X}{1}{d}|]$ be a power series ring over a field $\sk$ of characteristic zero, with unique maximal ideal $\m_T = (\vec{X}{1}{d})$. Let $R := T/\m_T^n$. Then $g(R) \leq \lm(R/\soc(R))= \lm(T/\m^{n-1})$.
\end{theorem}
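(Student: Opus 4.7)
The plan is to produce an explicit Gorenstein Artin local ring $S$ together with a surjection $S \twoheadrightarrow R$ whose kernel has length exactly $\lambda(T/\m_T^{n-1})$. All of the delicate combinatorial work has already been packaged into Corollary \ref{P1}, so the proof amounts to assembling the pieces. I first observe that the Artinian local ring $R = T/\m_T^n$ is intrinsically the same whether $T$ denotes the power series ring $\sk[|X_1,\ldots,X_d|]$ or the polynomial ring $\sk[X_1,\ldots,X_d]$ localized at its homogeneous maximal ideal, so I may pass to the graded setting of Corollary \ref{P1}.

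Set $l = X_1 + \cdots + X_d$ and $\mc_n = (X_1^n,\ldots,X_d^n) :_T l^{(d-2)(n-1)}$, and define $S := T/\mc_n$. Corollary \ref{P1} gives $\mc_n \subseteq \m_T^n$, so there is a natural surjection $S \twoheadrightarrow R$.

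Next I verify that $S$ is Gorenstein Artin local. The quotient $T/(X_1^n,\ldots,X_d^n)$ is a complete intersection, hence Gorenstein Artin local; moreover $l^{(d-2)(n-1)}$ has nonzero image in it (Theorem \ref{R3} applied with $f = 1$ shows that if $l^{(d-2)(n-1)}$ vanished, then $0 = \deg(1) \geq n - 1/2$, which fails for $n \geq 1$). Remark \ref{R0} with this ring and this homogeneous element then identifies
\[
S \;=\; T/\mc_n \;\cong\; \bigl(T/(X_1^n,\ldots,X_d^n)\bigr)\big/\bigl(0 :_{} l^{(d-2)(n-1)}\bigr)
\]
as Gorenstein Artin local.

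Finally, combining Corollary \ref{P1} with Theorem \ref{max2},
\[
\lambda(S) - \lambda(R) \;=\; \lambda(\m_T^n/\mc_n) \;=\; \lambda(T/\m_T^{n-1}) \;=\; \lambda(R/\soc(R)),
\]
where the last equality uses $\soc(R) = \m_T^{n-1}/\m_T^n$ from Theorem \ref{max2}. Since $S$ is a Gorenstein Artin local ring surjecting onto $R$, this gives $g(R) \leq \lambda(S) - \lambda(R) = \lambda(R/\soc(R))$, as desired. There is no real obstacle here beyond correctly invoking the ingredients; the only subtlety is ensuring that the nonvanishing hypothesis in Remark \ref{R0} is met, which is secured by Theorem \ref{R3}.
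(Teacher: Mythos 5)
Your proof is correct and follows the same approach as the paper: construct $S = T/\mc_n$ with $\mc_n = (X_1^n,\ldots,X_d^n):_T l^{(d-2)(n-1)}$ and invoke Corollary \ref{P1} for both the containment $\mc_n \subseteq \m_T^n$ and the length computation. You are slightly more careful than the paper in explicitly verifying the nonvanishing hypothesis of Remark \ref{R0} (via Theorem \ref{R3} applied to $f=1$) so that $S$ is indeed Gorenstein, a point the paper takes for granted.
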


\begin{proof}
Let $\mc_n = (\vec{X^n}{1}{d}):_Tl^{(d-2)(n-1)}$, where $l = X_1+\cdots+X_d$. Let $S = T/\mc_n$. Then $S$ is a Gorenstein Artin local ring mapping onto $R$. Note that $R \simeq \sk[\vec{X}{1}{d}]/(\vec{X}{1}{d})^n$ and $S \simeq \sk[\vec{X}{1}{d}]/((\vec{X^n}{1}{d}):_Tl^{(d-2)(n-1)})$.

Hence, by Corollary \ref{P1}, $\lm(S) - \lm(R) = \lm(R/\soc(R)) = \lm(T/\m_T^{n-1})$. This shows that $g(R) \leq \lm(R/\soc(R))$. 
\end{proof}

\begin{remark}{\rm
The ring $S$ constructed in the proof of the theorem does not work when $\char(\sk) = 2$. For example, when $d = 3$ and $n = 3$, we have $h_R(i) = 1,3,6$ and $h_S(i) = 1,2,5,2,1$.
}\end{remark}

\begin{remark}{\rm
Let $S$ be a graded Gorenstein Artin quotient of $T = \sk[X_1,\ldots,X_d]$, where $\sk$ is a field of characteristic zero. We say that $S$ is a compressed Gorenstein algebra of socle degree $t = \Max(S)$, if for each $i$, $h_S(i)$ is the maximum possible given $d$ and $t$, i.e., $h_S(i) = \min\{h_T(i),h_T(t-i)\}$ (e.g., see \cite{IK}). Note that the proofs of Proposition \ref{P0.5} and Corollary \ref{P1} show that $S = T/((\vec{X^n}{1}{d}):_Tl^{(d-2)(n-1)})$ is a compressed Gorenstein Artin algebra of socle degree $2n-2$. A similar technique also shows that $S = T/((\vec{X^n}{1}{d}):_Tl^{(d-2)(n-1) - 1})$ is a compressed Gorenstein Artin algebra of socle degree $2n-1$.
}\end{remark}

In the following remark, we record some key observations which we will use to prove Theorem \ref{sop}.
 
\begin{remark}\label{R}{\rm
Let $T = \sk[|\vec{X}{1}{d}|]$ be a power series ring over a field $\sk$. Let $\vec{f}{1}{d}$ be a system of parameters. Then $T' = \sk[|\vec{f}{1}{d}|]$ is a power series ring and $T$ is free over $T'$ of rank $e = \lm(T/(\vec{f}{1}{d}))$.  Thus, if $\mb$ and $\mc$ are ideals in $T'$, then $(\mc:_{T'}\mb)T = (\mc T :_T \mb T)$ and $\lm(T/\mb T) = e \cdot \lm(T'/\mb)$.
}\end{remark}

Firstly, we construct a Gorenstein Artin ring $S$ mapping onto $R$ such that $\lm(S) - \lm(R) = \lm(T/(\vec{f}{1}{d})^{n-1})$ which proves $g(R) \leq \lm(T/(\vec{f}{1}{d})^{n-1})$. We do this as follows:

Suppose that $\char(\sk) = 0$. Let $\md = (\vec{f}{1}{d})$, $\mc =(\vec{f^n}{1}{d}):_{T'}l^{(d-2)(n-1)}$, where $l = (f_1 + \cdots + f_d)$. We see that since $(\vec{f^n}{1}{d}) :_{T'}  l^{(d-2)(n-1)} \inc \md^n$ in $T'$ by Corollary \ref{P1}, the same holds in $T$  by using Remark \ref{R}. Moreover, since $\lm(\md^nT/\mc T) = e \lm(\md^n/\mc)$ and $\lm(T/\md^{n-1} T) = e \lm(T'/\md^{n-1})$, the length condition in Corollary \ref{P1} gives $\lm(\md^nT/\mc T) =  \lm(T/\md^{n-1} T)$.

This implies that if $R = T/\md^nT$, then $S = T/\mc T$ is a Gorenstein Artin ring mapping onto $R$ and $\lm(S) - \lm(R) = \lm(\md^nT/\mc T) =  \lm(T/\md^{n-1} T)$. Therefore $g(R) \leq \lm(T/\md^{n-1} T)$.  Thus as a consequence of Theorem \ref{max1}, we have proved 

\begin{theorem}\label{sop1}
Let $T = \sk[|\vec{X}{1}{d}|]$ be a power series ring over a field $\sk$ of characteristic zero, $\vec{f}{1}{d}$ be a system of parameters and $\md = (\vec{f}{1}{d})$. Let $R = T/\md^n$. Then $g(R) \leq \lm(T/\md^{n-1})$.
\end{theorem}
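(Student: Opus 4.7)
The plan is to reduce to the polynomial/maximal-ideal case already handled by Theorem \ref{max1} by exploiting the flat extension $T' = \sk[|\vec{f}{1}{d}|] \subseteq T$. Since $\vec{f}{1}{d}$ is a system of parameters in the regular local ring $T$, the subring $T'$ is itself a power series ring in $d$ variables, and $T$ is a finitely generated free $T'$-module of rank $e = \lm(T/\md)$. This is exactly the content of Remark \ref{R}, and it is the only ``new'' ingredient needed on top of the work already done.

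First I would set $\mc = (f_1^n,\ldots,f_d^n) :_{T'} l^{(d-2)(n-1)}$ where $l = f_1 + \cdots + f_d$, and define $S = T/\mc T$. Applying Corollary \ref{P1} inside $T'$ (which is legitimate since $T'$ is a power series ring in the $d$ independent variables $f_1,\ldots,f_d$ over $\sk$) gives both $\mc \inc (f_1,\ldots,f_d)^n = \md \cap T'$ in $T'$ and the equality $\lm_{T'}\bigl((\md \cap T')^n / \mc\bigr) = \lm(T'/(\md \cap T')^{n-1})$. Now extend to $T$: by Remark \ref{R}, colon ideals commute with the flat extension, so $\mc T = (f_1^n,\ldots,f_d^n) :_T l^{(d-2)(n-1)}$ and $\mc T \inc \md^n$. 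Moreover, $T'/\mc$ is a Gorenstein Artin local ring (being a Gorenstein liaison in the complete intersection $(f_1^n,\ldots,f_d^n)$), and this property is preserved under the free base change $T' \to T$, so $S = T/\mc T$ is a Gorenstein Artin local ring mapping onto $R = T/\md^n$.

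It remains to verify the length equality. Since $T$ is free over $T'$ of rank $e$, for any ideal $\mb \subseteq T'$ we have $\lm(T/\mb T) = e\cdot\lm(T'/\mb)$. Applying this to $\md^n/\mc$ and to $T'/(\md\cap T')^{n-1}$ and combining with the equality from Corollary \ref{P1} inside $T'$ yields
\[
\lm(S) - \lm(R) \;=\; \lm(\md^n T / \mc T) \;=\; e\cdot\lm\bigl((\md\cap T')^n/\mc\bigr) \;=\; e\cdot\lm(T'/(\md\cap T')^{n-1}) \;=\; \lm(T/\md^{n-1}).
\]
This exhibits a Gorenstein Artin local surjection $S \onto R$ with length defect $\lm(T/\md^{n-1})$, and by the definition of $g$ we conclude $g(R) \leq \lm(T/\md^{n-1})$.

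The only real subtlety, and the single step I would be most careful to justify, is the transfer of the colon ideal and of the Gorenstein property through the flat extension $T'\subseteq T$. The colon-ideal part is precisely what Remark \ref{R} packages, and the preservation of Gorensteinness follows because $T/\mc T = T\otimes_{T'} T'/\mc$ is the base change of an Artinian Gorenstein ring along a free (hence flat with Gorenstein closed fiber, since $T/\md$ is a field tensored up to a complete intersection quotient that is Gorenstein) extension. Once those two transfer statements are cleanly stated, the rest is bookkeeping with lengths and an application of Corollary \ref{P1}.
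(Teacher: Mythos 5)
Your proposal is correct and follows essentially the same route as the paper's proof: define the colon ideal $\mc = (\vec{f^n}{1}{d}):_{T'}l^{(d-2)(n-1)}$ inside $T' = \sk[|\vec{f}{1}{d}|]$, apply Corollary \ref{P1} there, and transfer the containment and the length equality to $T$ via the freeness of $T$ over $T'$ (Remark \ref{R}), scaling lengths by the rank $e$. The one place you diverge slightly is in justifying that $S = T/\mc T$ is Gorenstein, where you invoke flat base change with Gorenstein closed fiber; the paper leaves this implicit, and a more direct route is to note via Remark \ref{R} that $\mc T = (\vec{f^n}{1}{d}):_T l^{(d-2)(n-1)}$ is a colon into the complete intersection $(\vec{f^n}{1}{d})$ in $T$ itself, hence Gorenstein — but your argument is equally valid since $T/\md$ is a complete intersection, so the closed fiber is indeed Gorenstein.
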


In order to prove Theorem \ref{sop}, we know need to show that $g(R) \geq \lm(T/\md^{n-1})$. We prove this by first computing the trace ideal $\w^*(\w)$ of the canonical module and use the fundamental inequalities. We use the lemmas concerning the computation of $\w^*(\w)$ proved in section 2.

\begin{theorem}\label{sop2}
Let $T = \sk[|\vec{X}{1}{d}|]$ be a power series ring over a field $\sk$. Let $\vec{f}{1}{d}$ be a system of parameters and $R = T/(\vec{f}{1}{d})^n$. Then $\w^*(\w) = (\vec{f}{1}{d})^{n-1}/(\vec{f}{1}{d})^n$, where $\w$ is the canonical module of $R$. 
\end{theorem}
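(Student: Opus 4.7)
The plan is to reduce the statement to the special case already handled in Theorem \ref{max2}, using the flat extension result Corollary \ref{C0}. The key observation is that once the trace ideal has been computed when the system of parameters consists of the variables, the general case follows essentially by a base change along a power series inclusion.

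Concretely, I would set $T' = \sk[|f_1,\ldots,f_d|]$. By Remark \ref{R}, $T'$ is itself a regular (power series) local ring in $d$ variables and $T$ is free (in particular flat) of finite rank $e$ over $T'$. Since each $f_i$ lies in $\m_T$ (the $f_i$'s form a system of parameters, so none is a unit), one has $\m_{T'} T \inc \m_T$, which is exactly the hypothesis of Corollary \ref{C0} (with the roles of $T$ and $T'$ in that corollary interchanged). Let $\md = (f_1,\ldots,f_d)$ so that $\md T' = \m_{T'}$, and set $R' := T'/\m_{T'}^n$. Flatness gives $R = T/\md^n = T \otimes_{T'} R'$.

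Next I would apply Theorem \ref{max2} directly to the ring $R'$: since $R' = T'/\m_{T'}^n$ and $T'$ is a power series ring in $d$ variables over $\sk$, that theorem yields $\w_{R'}^*(\w_{R'}) = \m_{T'}^{n-1}/\m_{T'}^n$. Then Corollary \ref{C0}, applied to the flat extension $T' \to T$ and $R = T \otimes_{T'} R'$, gives
\[
\w_R^*(\w_R) = \w_{R'}^*(\w_{R'})\cdot R = (\m_{T'}^{n-1}/\m_{T'}^n)\cdot R = \md^{n-1}T/\md^n T,
\]
which is the desired conclusion.

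I do not anticipate any real obstacle: the entire content consists in checking that the hypotheses of Corollary \ref{C0} are met, which amounts to the observations about $T/T'$ recorded in Remark \ref{R}, and then invoking Theorem \ref{max2} in the maximal-ideal case. The only subtle point is recognising that the roles of $T$ and $T'$ in Corollary \ref{C0} must be swapped relative to our setup (here the smaller ring is $T'$ and the larger one is $T$), but the proof of that corollary goes through identically since it only uses flatness and the containment of maximal ideals.
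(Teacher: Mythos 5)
Your proposal is correct and follows essentially the same route as the paper: pass to $T' = \sk[|f_1,\ldots,f_d|]$, apply Theorem \ref{max2} to $R' = T'/\m_{T'}^n$, and transfer the computation of the trace ideal to $R$ via Corollary \ref{C0} using the freeness of $T$ over $T'$ from Remark \ref{R}. Your explicit note that the roles of $T$ and $T'$ in Corollary \ref{C0} must be interchanged is a point the paper glosses over, and it is a correct and worthwhile clarification.
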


\begin{proof}
Let $T' = \sk[[\vec{f}{1}{d}]]$, $\md = (\vec{f}{1}{d})^nT'$ and $R' \simeq T'/\md^n$. By Theorem \ref{max2}, $\w_{R'}^*(\w_{R'}) = \md^{n-1}/\md^n$. Therefore, since $T$ is free over $T'$, by Corollary \ref{C0}, $\w^*(\w) = \md^{n-1}T/\md^nT = (\vec{f}{1}{d})^{n-1}/(\vec{f}{1}{d})^n$.
\end{proof}

\noindent
{\it Proof of Theorem \ref{sop}.} By Theorem \ref{sop1}, $g(R) \leq \lm(T/(\vec{f}{1}{d}^{n-1}))$. The other inequality follows from Theorem \ref{sop2} which can be seen as follows:

Let $\w$ be the canonical module of $R$. We know that $g(R) \geq \lm(R/\w^*(\w))$ by the fundamental inequalities. This yields $g(R) \geq \lm(T/(\vec{f}{1}{d})^{n-1})$ since $R = T/(\vec{f}{1}{d})^n$ and $\w^*(\w) =  (\vec{f}{1}{d})^{n-1}/(\vec{f}{1}{d})^{n}$. This gives us the equality $g(R) = \lm(T/(\vec{f}{1}{d})^{n-1})$ proving the theorem.\hfill{$\square$}\\

\begin{corollary}
Let $T = \sk[|\vec{X}{1}{d}|]$ be a power series ring over a field $\sk$ of characteristic zero. Let $\vec{f}{1}{d}$ be a system of parameters and $R = T/(\vec{f}{1}{d})^n$. Then $g(R) \leq \lm(R/\soc(R))$.
\end{corollary}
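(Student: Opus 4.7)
The plan is to chain together the two main results of this section, Theorem \ref{sop} and Theorem \ref{sop2}, with the standard fact that for any Artinian local ring $\soc(R) \inc \w^*(\w)$. There is no real obstacle here; the corollary is essentially a formal consequence of results already proved.

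First, I read off $g(R) = \lm(T/\md^{n-1})$ from Theorem \ref{sop}, where $\md = (\vec{f}{1}{d})$. Second, Theorem \ref{sop2} identifies the trace ideal as $\w^*(\w) = \md^{n-1}/\md^n$, which gives the isomorphism $R/\w^*(\w) \simeq T/\md^{n-1}$ and hence the equality $\lm(R/\w^*(\w)) = \lm(T/\md^{n-1}) = g(R)$.

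The third step is to invoke $\soc(R) \inc \w^*(\w)$, which was noted when the fundamental inequalities were stated and is used implicitly in the proof of Theorem \ref{max2}. This containment holds because $\w$, being the injective hull of $\sk$ over $R$, surjects onto $\sk$, while $\sk$ embeds into $R$ with image inside $\soc(R)$; composing these maps produces homomorphisms in $\Hom_R(\w,R)$ whose images generate $\soc(R)$. From $\soc(R) \inc \w^*(\w)$ it follows that $R/\w^*(\w)$ is a quotient of $R/\soc(R)$, so $\lm(R/\w^*(\w)) \leq \lm(R/\soc(R))$. Assembling the chain
\[ g(R) \; = \; \lm(R/\w^*(\w)) \; \leq \; \lm(R/\soc(R)) \]
finishes the argument.
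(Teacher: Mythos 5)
Your proof is correct, but it realizes the key comparison by a route different from the paper's. Both arguments ultimately rest on the containment $\soc(R) \subseteq (\vec{f}{1}{d})^{n-1}/(\vec{f}{1}{d})^n$, which together with $g(R) = \lm(T/(\vec{f}{1}{d})^{n-1})$ from Theorem \ref{sop} gives the claim. You obtain that containment abstractly: the general fact $\soc(R) \subseteq \w^*(\w)$ (for which your argument — composing a surjection $\w \twoheadrightarrow \sk$ with an embedding $\sk \hookrightarrow \soc(R) \subseteq R$ to produce maps in $\Hom_R(\w,R)$ hitting any socle element — is valid) combined with the identification $\w^*(\w) = (\vec{f}{1}{d})^{n-1}/(\vec{f}{1}{d})^n$ from Theorem \ref{sop2}. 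The paper instead uses a one-line colon-ideal computation that avoids $\w^*(\w)$ entirely: since $(\vec{f}{1}{d}) \subseteq \m_T$ and $\vec{f}{1}{d}$ is a regular sequence, $(\vec{f}{1}{d})^n :_T \m_T \subseteq (\vec{f}{1}{d})^n :_T (\vec{f}{1}{d}) = (\vec{f}{1}{d})^{n-1}$, so $\soc(R) \subseteq (\vec{f}{1}{d})^{n-1}/(\vec{f}{1}{d})^n$. The paper's version is more elementary and self-contained; your version leans on the already-proved Theorem \ref{sop2}, which is more machinery than needed but does unify the estimate with the fundamental inequalities, since $\lm(R/\w^*(\w))$ is the lower bound for $g(R)$ and you are showing it equals $g(R)$ and is bounded above by $\lm(R/\soc(R))$.
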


\begin{proof}
We have $\lm(R/\soc(R)) \geq   \lm(T/(\vec{f}{1}{d})^{n-1}) = g(R)$, since $(\vec{f}{1}{d})^n:_T (\vec{X}{1}{d}) \inc (\vec{f}{1}{d})^n :_T (\vec{f}{1}{d}) = (\vec{f}{1}{d})^{n-1}$. 
\end{proof}

\begin{remark}{\rm
Let $T = \sk[[X,Y]]$, $R = T/(X,Y)^n$ and $S = T/(X^n,Y^n)$. Then $S$ is a Gorenstein Artin local ring mapping onto $R$ such that $\lm(S) - \lm(R) = \lm(T/\m_T^{n-1}) = \lm(R/\soc(R))$. This, together with Corollary \ref{C1}, shows that $g(R) = \lm(R/\m^{n-1})$ without any assumptions on the characteristic of $\sk$. Thus, when $d = 2$, using the technique described in Remark \ref{R}, we see that the conclusion of Theorem \ref{sop} is independent of the characteristic of $\sk$.
}\end{remark}

\begin{remark}\label{max}{\rm
By taking $\md$  to be the maximal ideal in Theorem \ref{sop}, we get the following: Let $\sk$ be a field of characteristic zero and $T = \sk[|\vec{X}{1}{d}|]$ be a power series ring over $\sk$. Let  $\m_T = (\vec{X}{1}{d})$ be the maximal ideal of $T$ and $R := T/\m_T^n$. Then $$g(R) = \lm(T/\m_T^{n-1}) = \lm(R/\soc(R)).$$
This also follows immediately from Theorems \ref{max2} and \ref{max1}.
}\end{remark}

\begin{remark}{\rm
If $R = \sk[\vec{X}{1}{d}]/(\vec{X}{1}{d})^n$, where $\sk$ is a field of characteristic zero, it follows from Theorem \ref{max2} and Theorem \ref{max1} that $g(R) = \lm(R/\w^*(\w))$. Thus Question 3.9 in \cite{A} has a positive answer, i.e., in this case, $$\min\{\lm(R/\ma):\ma \simeq \ma^\vee \} = g(R).$$ 
}\end{remark}

\section{\large Applications to Gorenstein Linkage}

\begin{proposition}\label{P2}
Let $(S, \m, \sk)$ be a graded Gorenstein Artin local ring such that $\deg(\soc(S)) = t$. Let $f \in \m$ be a homogeneous element in $S$ of degree $s$ and $\mc = (0 :_S f)$. Then $(\mc :_S \m^n) = \m^{(t+1)-(s+n)} + \mc$.
\end{proposition}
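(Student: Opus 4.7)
The plan is to first translate the colon ideal $(\mc :_S \m^n)$ into a statement about annihilators of powers of $\m$, then invoke the self-duality of a graded Artin Gorenstein algebra to reduce to a purely degree-theoretic computation. For any $x \in S$, one has $x \in (\mc :_S \m^n)$ if and only if $fx\m^n = 0$, i.e. $fx \in (0 :_S \m^n)$. A standard consequence of the graded Gorenstein hypothesis with socle degree $t$ is the equality $(0 :_S \m^j) = \m^{t+1-j}$ (with the convention $\m^k = S$ for $k \leq 0$): one inclusion is $\m^{t-j+1}\cdot\m^j \subseteq \m^{t+1} = 0$, and the reverse follows from a length count, namely $\lm(\m^{t-j+1}) = \sum_{i \geq t-j+1} h_S(i) = \sum_{i < j} h_S(i) = \lm(S/\m^j) = \lm((0:_S \m^j))$, using both the Gorenstein symmetry $h_S(i) = h_S(t-i)$ and the fact that $\Hom_S(-,S)$ is Matlis duality on Artinian $S$-modules. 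This reduces the proposition to showing $(\m^{t+1-n} :_S f) = \m^{(t+1)-(s+n)} + \mc$.

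The inclusion $\supseteq$ is immediate from degree considerations: multiplication by $f$ (of degree $s$) sends $\m^{(t+1)-(s+n)}$ into $\m^{t+1-n}$, and $f\mc = 0$. For the reverse, since the three ideals in question are all homogeneous (as $f$, the powers $\m^k$, and $\mc = (0:_S f)$ are), it is enough to take a homogeneous $x$ of degree $j$ with $fx \in \m^{t+1-n}$. If $j \geq (t+1)-(s+n)$, then $x \in S_j \subseteq \m^{(t+1)-(s+n)}$ by standard-gradedness. Otherwise $fx$ is homogeneous of degree $j + s < t+1-n$, but $\m^{t+1-n}$ is concentrated in degrees $\geq t+1-n$, which forces $fx = 0$ and hence $x \in \mc$.

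I do not anticipate a serious obstacle: the argument is essentially degree bookkeeping layered on top of the standard fact $(0:_S \m^j) = \m^{t+1-j}$ for graded Gorenstein Artin algebras. The one place to be mildly careful is the implicit standard-graded assumption carried over from Section~3, without which the containment $S_j \subseteq \m^{(t+1)-(s+n)}$ appearing in the homogeneous-component step could fail; another is handling the corner cases $t+1-j \leq 0$ and $(t+1)-(s+n) \leq 0$, both of which collapse consistently under the convention $\m^k = S$ for $k \leq 0$.
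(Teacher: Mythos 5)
Your proof is correct. The overall structure (easy inclusion by degree count, then homogeneous case analysis for the reverse) matches the paper's, but the Gorenstein input is packaged differently. The paper argues directly with the perfect pairing: if a homogeneous $g$ of degree $< (t+1)-(s+n)$ has $gf\neq 0$, then $gf$ is a nonzero form of degree $\leq t-n$, so by nondegeneracy of the pairing into $\soc(S)$ there is $h$ of complementary degree $\geq n$ with $(gf)h\neq 0$, whence $g\notin(\mc:\m^n)$. You instead first isolate and prove the annihilator identity $(0:_S\m^j)=\m^{t+1-j}$ (via Matlis self-duality of $S$ and the Hilbert-function symmetry $h_S(i)=h_S(t-i)$), then reduce $(\mc:\m^n)$ to $(\m^{t+1-n}:f)$, after which the remaining verification is pure degree bookkeeping with no further Gorenstein content. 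The two routes draw on the same underlying self-duality, but yours factors the argument through a clean, reusable lemma and makes the final step entirely elementary, whereas the paper's is shorter but keeps the duality intertwined with the case analysis. Your attention to the conventions $\m^k=S$ for $k\leq 0$ and to standard-gradedness is appropriate; these are implicitly assumed in the paper as well.
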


\begin{proof}
Note that $\m^{(t+1)-(s+n)} \cdot \m^n \cdot f \inc \m^{t + 1} = 0$. Hence $\m^{(t+1)-(s+n)} + \mc \inc \mc_n :_S \m^n$. To prove the other inclusion, let $g$ be a homogeneous form of degree less than $(t+1)-(s+n)$. Then $g \cdot f$ is a homogeneous form of degree $t - n$ or less. If $g \cdot f = 0$, then $g \in \mc$. If  $g \cdot f \neq 0$, since $S$ is Gorenstein, there is some element $h \in \m^n$ such that $(g f) \cdot h$ generates \soc(S) and hence is not zero. Thus $g f \m^n \neq 0$ for $g \not\in \m^{(t+1) - (s+n)} + \mc$. Therefore $(\mc :_S \m^n) \inc \m^{(t+1)-(s + n)} + \mc$, proving the proposition.
\end{proof}

\begin{corollary}\label{C2}
Let $\sk$ be a field of characteristic zero and $T = \sk[|\vec{X}{1}{d}|]$ be a power series ring. Let $\m = (\vec{X}{1}{d})$ and $\mc_n = ((\vec{X^n}{1}{d}):_T l^{s})$, where $l = (X_1 + \cdots + X_d)$ and $s \geq (d-2)(n-1) - 1$. Then $(\mc_n :_T \m^n) = \m^{(d-1)(n-1) - s}$.
\end{corollary}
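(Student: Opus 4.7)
The plan is to apply Proposition \ref{P2} to the graded Gorenstein Artin local ring $\bar S := T/(X_1^n, \ldots, X_d^n)$, which has socle degree $t = d(n-1)$, taking for the distinguished element the image $\bar l^s$ of $l^s$ in $\bar S$. This image is a nonzero homogeneous element of degree $s$, for otherwise Theorem \ref{R3} (applied to the homogeneous element $1$) would force $0 \geq (d(n-1) - s + 1)/2$, putting $s$ outside the interesting range. Under the order-preserving bijection between ideals of $T$ containing $(X_1^n, \ldots, X_d^n)$ and ideals of $\bar S$, the ideal $(0 :_{\bar S} \bar l^s)$ corresponds to $\mc_n$ and the maximal ideal of $\bar S$ corresponds to $\m$. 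Hence the conclusion of Proposition \ref{P2} translates to
\[
(\mc_n :_T \m^n) \;=\; \m^{(t+1)-(s+n)} + \mc_n \;=\; \m^{(d-1)(n-1) - s} + \mc_n.
\]

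To finish, I would argue that $\mc_n \subseteq \m^{(d-1)(n-1)-s}$, so that the sum above collapses to its first summand. Since $(X_1^n, \ldots, X_d^n)$ and $l^s$ are both homogeneous, $\mc_n$ is a homogeneous ideal, and it suffices to treat a homogeneous element $F \in \mc_n$. By definition $l^s F \in (X_1^n, \ldots, X_d^n)$, so Theorem \ref{R3} (with $m = s$) yields
\[
\deg(F) \;\geq\; \frac{d(n-1) - s + 1}{2}.
\]

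The main step is the arithmetic observation that the hypothesis $s \geq (d-2)(n-1) - 1$ is precisely equivalent to $(d(n-1) - s + 1)/2 \geq (d-1)(n-1) - s$; clearing the denominator and rearranging turns the latter into $s + 1 \geq (d-2)(n-1)$. Thus $F \in \m^{(d-1)(n-1)-s}$, so $\mc_n \subseteq \m^{(d-1)(n-1)-s}$ and the sum simplifies as required. The conceptual obstacle, such as it is, lies in recognizing that the numerical threshold $s \geq (d-2)(n-1) - 1$ appearing in the statement is exactly the point at which the Reid--Roberts--Roitman bound is strong enough to absorb $\mc_n$ into $\m^{(d-1)(n-1)-s}$; everything else is a direct translation through Proposition \ref{P2}.
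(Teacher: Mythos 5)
Your proof is correct and takes essentially the same route as the paper: apply Proposition \ref{P2} with $S = T/(X_1^n,\ldots,X_d^n)$ to obtain $(\mc_n :_T \m^n) = \m^{(d-1)(n-1)-s} + \mc_n$, then use Theorem \ref{R3} together with the arithmetic equivalence $s \geq (d-2)(n-1)-1 \iff (d(n-1)-s+1)/2 \geq (d-1)(n-1)-s$ to absorb $\mc_n$ into $\m^{(d-1)(n-1)-s}$. The only additions are your explicit check that $\bar l^s \neq 0$ (implicitly assumed in the paper) and the spelled-out translation through the correspondence of ideals; both are harmless elaborations.
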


\begin{proof}
By taking $S = T/(\vec{X^n}{1}{d})$, it follows from Proposition \ref{P2} that $(\mc_n :_T \m^n) = \m^{(d-1)(n-1) - s} + \mc_n$. It remains to prove that $\mc_n \inc \m^{(d-1)(n-1) - s}$. 

Let $f$ be a homogeneous element of $T$ such that $f\in \mc$, i.e., $f \cdot l^s \inc (\vec{X^n}{1}{d})$. Hence by Theorem \ref{R3}, $deg(f) \geq \frac{(d(n-1) - s + 1)}{2} \geq (d-1)(n-1) - s$ by the hypothesis on $s$. This shows that $\mc_n \inc m^{(d-1)(n-1) - s}$.
\end{proof}

Let $T = \sk[|\vec{X}{1}{d}|]$ be a power series ring over a field $\sk$. Let $\vec{f}{1}{d}$ be a system of parameters. Let $T' = \sk[|\vec{f}{1}{d}|]$, $\md = (\vec{f}{1}{d})^nT'$ and $\mc_n =(\vec{f^n}{1}{d}):_{T'}l^{s}$, where $l = f_1 + \cdots + f_d$ and $s \geq (d-2)(n-1)-1$. Since, by Corollary \ref{C2}, $(\mc_n :_{T'} \md^n) = \md^{(d-1)(n-1)-s}$ in $T'$, the same holds in $T$ by Remark \ref{R}. Therefore $(\mc_n T :_T \md^n T) = \md^{(d-1)(n-1)-s}T$. Thus we see that 

\begin{proposition}\label{P3}
Let $\sk$ be a field of characteristic zero and $T = \sk[|\vec{X}{1}{d}|]$ be a power series ring. Let $\md = (\vec{f}{1}{d})$, where $\vec{f}{1}{d}$ form a system of parameters. Let $l = f_1 + \cdots  + f_d$ and $s \geq (d-2)(n-1)-1$. Then $\mc_n = ((\vec{f^n}{1}{d}):_T l^s)$ is a Gorenstein ideal such that $(\mc_n :_T \md^n) = \md^{(d-1)(n-1)-s}$.
\end{proposition}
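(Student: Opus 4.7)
The plan is to descend to the subring $T' = \sk[|f_1,\ldots,f_d|]$---itself a power series ring in $d$ variables---apply Corollary \ref{C2} there, and then lift the resulting identity back up to $T$ using the flatness recorded in Remark \ref{R}. Inside $T'$ the variables are $f_1, \ldots, f_d$, and the hypothesis $s \geq (d-2)(n-1) - 1$ is precisely the hypothesis of Corollary \ref{C2} (reading the $f_i$ in the role of the $X_i$). That corollary therefore yields
\[
\bigl((f_1^n,\ldots,f_d^n):_{T'} l^s\bigr) :_{T'} \m_{T'}^n \;=\; \m_{T'}^{(d-1)(n-1)-s},
\]
where $\m_{T'} = (f_1,\ldots,f_d)T'$ is the maximal ideal of $T'$.

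To lift this identity to $T$, I would invoke Remark \ref{R}: since $T$ is free over $T'$, we have $(\mc :_{T'} \mb)T = (\mc T :_T \mb T)$ for all ideals $\mb,\mc$ of $T'$, and $\m_{T'} T = \md$. Applied to $\mc = \bigl((f_1^n,\ldots,f_d^n):_{T'} l^s\bigr)$ and $\mb = \m_{T'}^n$, together with one more use of Remark \ref{R} identifying $\mc T$ with $\mc_n = \bigl((f_1^n,\ldots,f_d^n):_T l^s\bigr)$, this produces the colon identity $(\mc_n :_T \md^n) = \md^{(d-1)(n-1)-s}$ claimed in the proposition.

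For the Gorenstein assertion I would observe that $f_1^n,\ldots,f_d^n$ forms a regular sequence in $T$, so $\bar T := T/(f_1^n,\ldots,f_d^n)$ is a complete intersection and in particular a Gorenstein Artin local ring. In $\bar T$ the image of $\mc_n$ is the annihilator $(0 :_{\bar T} \bar l^{\,s})$ of a single element, and it is a classical fact (the ungraded analogue of Remark \ref{R0}, arising from Matlis duality) that in a Gorenstein Artin local ring $S$ the quotient $S/(0:_S g)$ is again Gorenstein for every nonzero $g \in S$. Applied to $\bar T$, this shows that $T/\mc_n$ is Gorenstein, so $\mc_n$ is a Gorenstein ideal. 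The only real subtlety in the argument is checking that the transfer $T' \to T$ preserves both the colon identity and the Gorenstein property; both are consequences of the freeness of $T$ over $T'$ together with the fact that $T/\m_{T'}T$ is a complete intersection, and both are packaged into Remark \ref{R}, so the proof reduces to a careful bookkeeping of Corollary \ref{C2}.
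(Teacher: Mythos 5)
Your proposal is correct and matches the paper's own argument: the paper likewise descends to $T' = \sk[|f_1,\ldots,f_d|]$, applies Corollary \ref{C2} there, and lifts the colon identity to $T$ via the freeness recorded in Remark \ref{R}. Your explicit justification of the Gorenstein property of $\mc_n$ (via the fact that $(0:_S g)$ is a Gorenstein ideal in a Gorenstein Artin local ring $S$) fills in a step the paper leaves implicit, but it is the same fact the paper invokes in the graded setting (Remark \ref{R0}, Proposition \ref{P0.5}).
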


\begin{definition} Let $(T,\m_T,\sk)$ be a regular local ring. An unmixed ideal $\mb \inc T$ is said to be in the {\it {\underline G}orenstein \underline {li}nkage {\underline c}lass of a {\underline c}omplete {\underline i}ntersection (glicci)} if there is a sequence of ideals $\mc_n \inc \mb_n$, $\mb_0 = \mb$, satisfying\\ 1) $T/\mc_n$ is Gorenstein for every $n$\\ 2) $\mb_{n+1} = (\mc_{n} :_T \mb_{n}) $ and \\3) $\mb_n$ is a complete intersection for some $n$.
\end{definition}

We say that $\mb$ is {\it linked} to $\mb_n$ via Gorenstein ideals in $n$ steps.

\begin{remark}\label{R2}\hfill{}\\{\rm
1. Let $\sk$ be a field of characteristic zero and $T = \sk[|\vec{X}{1}{d}|]$ be a power series ring. Let $\md = (\vec{f}{1}{d})$, where $\vec{f}{1}{d}$ form a system of parameters. In \cite{KMMNP}, Kleppe, Migliore, Miro-Roig, Nagel and Peterson show that $\md^n$ can be linked to $\md^{n-1}$ via Gorenstein ideals in 2 steps and hence to $\md$ in $2(n-1)$ steps. But in Proposition \ref{P3}, by taking $s = (d-2)(n-1) $, we see that $\md^n$ can be linked directly via the Gorenstein ideal $(\vec{f^n}{1}{d}):_{T'}l^{(d-2)(n-1)}$ to $\md^{n-1}$, and hence to $\md$, a complete intersection, in $n-1$ steps.

\noindent
2. In a private conversation, Migliore asked if this technique will show that $\md^n$ is self-linked. We see that this can be done by taking $s = (d-2)(n-1) - 1$ in Proposition \ref{P3}. Thus $\md^n$ is linked to itself via the Gorenstein ideal $(\vec{f^n}{1}{d}):_{T'}l^{(d-2)(n-1) - 1}$.
}\end{remark}

\vskip 5 pt
\centerline{\bf A Possible Approach to the Glicci Problem}

\noindent
{\bf The Glicci problem:} Given any homogeneous ideal $\mb \inc T := \sk[\vec{X}{1}{d}]$, such that $R := T/\mb$ is Cohen-Macaulay, is it true that $\mb$ is glicci?
  
A possible approach to the glicci problem is the following: Choose $\mc_n \inc \mb_n$ to be the closest Gorenstein. The question is: Does this ensure that $\mb_n$ is a complete intersection for some $n$? 

\begin{example}{\rm
Let $T = \sk[|\vec{X}{1}{d}|]$, where $\char(\sk) = 0$. Let $\md = (\vec{f}{1}{d})$ be an ideal generated minimally by a system of parameters. We know by Theorems \ref{sop} and \ref{sop1} that the ideal $\mc_n = (\vec{f^n}{1}{d}):_T (f_1 + \cdots + f_d)^{(d-2)(n-1)}$ is a Gorenstein ideal closest to $\md^n$. Now by taking $s = (d-2)(i-1)$ in Proposition \ref{P3}, we see that $\mc_i :_T \md^i = \md^{i-1}$, $2 \leq i \leq n$. Thus $\md^n$ can linked to $\md$ by choosing a closest Gorenstein ideal at each step.
}\end{example}

\section{\large The Codimension Two Case}

We begin this section by recalling the following result of Serre characterizing Gorenstein ideals of codimension two. 

\begin{remark}{\rm
Let $(T,\m_T,\sk)$ be a regular local ring of dimension two. Let $\mc$ be an $\m_T$ primary ideal such that $S = T/\mc$ is a Gorenstein Artin local ring. Then $S$ is a complete intersection ring, i.e., $\mc$ is generated by 2 elements. 
}\end{remark}

\noindent
{\bf Notation:} For the rest of this section, we will use the following notation: Let $(T,\m_T,\sk)$ be a regular local ring of dimension 2, where that $\sk$ is infinite. By $\mu(\-)$, we denote the minimal number of generators of a module and by $e_0(\-)$, we denote the multiplicity of an $\m_T$-primary ideal in $T$. For an ideal $\mb$ in $T$, by $\bar{\mb}$, we denote the integral closure of $\mb$ in $T$.

\begin{remark}{\rm  We state the basic facts needed in this section in this remark. Their proofs can be found in \cite{HS} (Chapter 14).

\noindent
1. Let $\mb$ be an $\m_T$-primary ideal. We define the order of $\mb$ as $\ord(\mb) = \max\{ i : \mb \inc \m_T^i\}$.

Since $\m_T$ is integrally closed, $\ord(\mb) = \ord(\overline \mb)$.

\noindent
2.  Let $\mb$ be an $\m_T$-primary ideal. Since $\sk$ is infinite, a minimal reduction of $\mb$ is generated by 2 elements.  

Further, if $\mc$ is a minimal reduction of $\mb$, the multiplicity of $\mb$, $e_0(\mb) = \lm(T/\mc)$.

\noindent
3. The product of integrally closed $\m_T$-primary ideals is integrally closed. In particular, if $\mb$ is an integrally closed $\m_T$-primary ideal, then so is $\mb^n$ for each $n \geq 2$.  

\noindent
4. For an $\m_T$-primary ideal $\mb$, $\lm((\mb:\m_T)/\mb) = \mu(\mb) -1 \leq \ord(\mb)$.  Further, if $\mb$ is integrally closed, $\mu(\mb) -1 = \ord(\mb)$. 

In particular, this yields $\mu(\mb) \leq \mu(\overline \mb)$.
}\end{remark}

\begin{proposition}\label{P4}
Let $(T,\m_T,\sk)$ be a regular local ring of dimension two and let $\mb$ be an $\m_T$-primary ideal.  The closest (in terms of length) Gorenstein ideals contained in $\mb$ are its minimal reductions.
\end{proposition}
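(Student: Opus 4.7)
The plan is to combine Serre's characterization of codimension-two Gorenstein ideals (the preceding remark) with the standard theory of reductions and multiplicities. By Serre's result, every Gorenstein ideal $\mc \subseteq T$ with $T/\mc$ Artinian is a complete intersection generated by two elements, so $T/\mc$ is a complete intersection Artin quotient of the Cohen-Macaulay ring $T$, and therefore $\lm(T/\mc) = e_0(\mc)$. This reduces the entire comparison to multiplicities.

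First I would identify the minimum of $\lm(T/\mc)$ over Gorenstein ideals $\mc \subseteq \mb$ as $e_0(\mb)$. The containment $\mc \subseteq \mb$ gives $e_0(\mc) \geq e_0(\mb)$, with equality exactly when $\mc$ is a reduction of $\mb$; this is Rees's theorem, applicable because $T$ is regular hence formally equidimensional. Since $\sk$ is infinite, $\mb$ admits a minimal reduction $\mc_0$ generated by $\ell(\mb)=2$ elements (item 2 of the basic-facts remark). Such $\mc_0$ is a two-generated complete intersection, hence Gorenstein and contained in $\mb$, with $\lm(T/\mc_0)=e_0(\mc_0)=e_0(\mb)$. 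This both pins down the minimum and shows that every minimal reduction of $\mb$ attains it.

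For the converse direction I would show that any Gorenstein ideal $\mc \subseteq \mb$ with $\lm(T/\mc) = e_0(\mb)$ is a minimal reduction of $\mb$. By the first paragraph such a $\mc$ is a two-generated reduction of $\mb$; to prove minimality I would argue by contradiction. If some $\mc' \subsetneq \mc$ were still a reduction of $\mb$, I would extract inside $\mc'$ a minimal reduction $\mc_0$ (also a reduction of $\mb$), which is two-generated and $\m_T$-primary, hence a complete intersection, and satisfies $e_0(\mc_0) = e_0(\mb) = e_0(\mc)$. Since both $\mc_0$ and $\mc$ are complete intersections, the length-equals-multiplicity identity gives $\lm(T/\mc_0) = \lm(T/\mc)$, contradicting the strict containment $\mc_0 \subseteq \mc' \subsetneq \mc$.

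The step I expect to carry the real content is the clean invocation of Rees's theorem to convert the equality $e_0(\mc) = e_0(\mb)$ into the statement that $\mc$ is a reduction of $\mb$; after that, everything becomes routine multiplicity bookkeeping made possible by Serre's theorem, which forces every competing Gorenstein ideal into the complete-intersection regime where $\lm(T/\mc)$ and $e_0(\mc)$ coincide.
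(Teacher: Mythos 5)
Your argument is correct and its first half coincides exactly with the paper's proof: invoke Serre's characterization to force every competing Gorenstein ideal $\mc \inc \mb$ into the complete-intersection regime, equate $\lm(T/\mc)$ with $e_0(\mc)$ there, use the monotonicity $e_0(\mc) \geq e_0(\mb)$, and note that a minimal reduction $(f,g)$ of $\mb$ is itself a two-generated Gorenstein ideal with $\lm(T/(f,g)) = e_0(\mb)$. Where you go beyond the paper is in actually proving the converse direction implicit in the statement---that a Gorenstein ideal attaining the minimum must be a minimal reduction. You invoke Rees's theorem (legitimate since $T$ regular implies formally equidimensional) to upgrade the equality $e_0(\mc) = e_0(\mb)$ to ``$\mc$ is a reduction of $\mb$,'' and then observe that a two-generated reduction containing a minimal reduction of the same multiplicity must coincide with it by the length-equals-multiplicity identity for parameter ideals. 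The paper's proof stops after showing that minimal reductions attain the bound, which establishes that they are among the closest Gorenstein ideals but not that nothing else is; your appeal to Rees's theorem supplies the missing half, and is the one genuinely new ingredient relative to the paper's argument.
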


\begin{proof} 
Let $\mc \inc \mb$ be any Gorenstein ideal (and hence a complete intersection by the above remark). It is easy to see that $\lm(T/\mc) \geq \lm(T/(f,g))$, where $(f,g) \inc \mb$ is a minimal reduction of $\mb$. The reason is that \[\begin{array}{rll}\lm(T/\mc) &= e_0(\mc)&\\ &\geq e_0(\mb)&\text{ since }\mc \inc \mb\\& = \lm(T/(f,g)).&\end{array}\]  As a consequence, $$\lm(T/\mc) - \lm(T/\mb) \geq \lm(T/(f,g)) - \lm(T/\mb),$$ $$\text{i.e., }\lm(\mb/\mc) \geq \lm(\mb/(f,g)).$$ Thus the closest Gorenstein ideal contained in $\mb$ is a minimal reduction $(f,g)$.
\end{proof}

\noindent 
We now prove the following theorem which shows that $g(R) \leq \lm(R/\soc(R))$ where $R$ is the Artinian quotient of a 2-dimensional regular local ring.

\begin{theorem}\label{T}
Let $(T,\m_T,\sk)$ be a regular local ring of dimension 2, with infinite residue field $\sk$. Set $R = T/\mb$ where $\mb$ is an $\m_T$-primary ideal. Then $g(R) \leq \lm(R/\soc(R))$, i.e., there is a Gorenstein ring $S$ mapping onto $R$ such that $\lm(S) - \lm(R) \leq \lm(R/\soc(R))$.
\end{theorem}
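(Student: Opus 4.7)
The plan is to produce a Gorenstein ring $S$ mapping onto $R$ by taking a minimal reduction of $\mb$. By Proposition \ref{P4}, the Gorenstein ideal $\mc \inc \mb$ minimizing $\lm(\mb/\mc)$ is any minimal reduction $(f,g)$ of $\mb$; setting $S = T/(f,g)$ I obtain
\[g(R) \leq \lm(S) - \lm(R) = e_0(\mb) - \lm(R).\]
Since $\lm(R/\soc(R)) = \lm(R) - \lm(\soc(R))$, it therefore suffices to establish the numerical inequality $e_0(\mb) + \lm(\soc(R)) \leq 2\lm(R)$.

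My first step is to reduce to the case where $\mb$ is integrally closed. From the facts collected in the preceding remark, $e_0(\mb) = e_0(\overline{\mb})$ (reductions have the same multiplicity), $\lm(R) \geq \lm(T/\overline{\mb})$ (from $\mb \inc \overline{\mb}$), and $\lm(\soc(R)) = \mu(\mb) - 1 \leq \mu(\overline{\mb}) - 1 = \lm(\soc(T/\overline{\mb}))$. Assuming the target inequality for $\overline{\mb}$, it propagates to $\mb$:
\[e_0(\mb) + \lm(\soc(R)) \leq e_0(\overline{\mb}) + \lm(\soc(T/\overline{\mb})) \leq 2\lm(T/\overline{\mb}) \leq 2\lm(R).\]

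Next, assume $\mb$ is integrally closed, so that $\lm(\soc(R)) = \mu(\mb) - 1 = \ord(\mb)$. Here I would invoke the Hoskin-Deligne formula (Theorem \ref{HD}): writing $r_v = \ord_v(\mb)$ for the finitely many divisorial valuations $v$ (indexed by infinitely near points of $T$) that appear in the expansion of $\mb$,
\[\lm(T/\mb) = \sum_v \binom{r_v + 1}{2}[\kappa(v):\sk], \qquad e_0(\mb) = \sum_v r_v^2\, [\kappa(v):\sk].\]
Subtracting gives $2\lm(R) - e_0(\mb) = \sum_v r_v [\kappa(v):\sk]$, which I must show is at least $\ord(\mb)$. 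But the valuation $\ord_{\m_T}$ occurs among the $v$'s with $\kappa(\ord_{\m_T}) = \sk$ and $r_{\ord_{\m_T}} = \ord(\mb)$, contributing exactly $\ord(\mb)$ to the sum; the remaining terms are non-negative, completing the verification.

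The main obstacle will be correctly unraveling the Hoskin-Deligne formula — in particular, matching the single number $\lm(\soc(R)) = \ord(\mb)$ with one specific summand while accounting for the residue-field degrees $[\kappa(v):\sk]$. The reduction to integrally closed ideals is also delicate: all three quantities $e_0$, $\lm(R)$, $\lm(\soc(R))$ shift when $\mb$ is replaced by $\overline{\mb}$, and only because the coefficient of $\lm(R)$ on the right-hand side is $2$ (rather than $1$) do the three inequalities line up in the direction needed.
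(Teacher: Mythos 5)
Your proof is correct and follows essentially the same route as the paper: take a minimal reduction $(f,g)$ of $\mb$, set $S = T/(f,g)$, reduce to the numerical inequality $e_0(\mb) + \lm(\soc(R)) \leq 2\lm(R)$, pass to $\overline{\mb}$ using $e_0(\mb)=e_0(\overline{\mb})$, $\mu(\mb)\leq\mu(\overline{\mb})$, $\lm(T/\overline{\mb})\leq\lm(T/\mb)$, and then obtain the inequality for an integrally closed ideal by subtracting the multiplicity formula from the Hoskin--Deligne formula and isolating the contribution of the base order valuation. The "obstacles" you flag at the end are in fact dispatched by the computation you already wrote, so there is nothing left to fill in.
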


In order to prove Theorem \ref{T}, we use a couple of formulae for $e_0(\mb)$ and $\lm(R)$ (which can be found, for example, in \cite{JV}). We need the following notation. 

Let $(T,\m)$ and $(T',\n)$ be two-dimensional regular local rings. We say that $T'$ birationally dominates $T$ if $T \inc T'$, $\n \cap T = \m$ and $T$ and $T'$ have the same quotient field. We denote this by $T \leq T'$. Let $[T':T]$ denote the degree of the field extension $T/\m \inc T'/\n$.

Further if $\mb$ is an $\m$-primary ideal in $T$, let $\mb^{T'}$ be the ideal in $T'$ obtained from $\mb$ by factoring $\mb T' = x \mb^{T'}$, where $x$ is the greatest common divisor of the generators of $\mb T'$. 
The following theorem (\cite{JV}, Theorem 3.7) gives a formula for $e_0(\mb)$.

\begin{theorem}\label{multiplicityFormula}
Let $(T,\m_T,\sk)$ be a two-dimensional regular local ring and $\mb$ be an $\m_T$-primary ideal. Then $$e_0(\mb) = \sum_{T \leq T'} [T':T]\ord(\mb^{T'})^2.$$
\end{theorem}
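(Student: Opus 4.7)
The formula is the classical Hoskin--Deligne formula, and my plan is to prove it by induction on the invariant $N(\mb) := \lambda(T/\overline{\mb})$, which is finite since $\mb$ is $\m_T$-primary and which strictly decreases upon passing to a proper transform $\mb^{T'}$ at a first quadratic transform $T'$ of $T$. For the base case I would take $\mb = \m_T$: here $e_0(\m_T) = 1$, and for any $T' > T$ the ideal $\m_T T'$ is principal (generated by a coordinate), so $\m_T^{T'} = T'$ and $\ord(\m_T^{T'}) = 0$. Only the term $T' = T$ contributes to the sum, giving $[T:T]\cdot 1^2 = 1$, which matches $e_0(\m_T)$.

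For the inductive step, I would invoke the following reduction formula. Let $r = \ord(\mb)$ and let $T_1,\ldots,T_k$ denote the first quadratic transforms of $T$ at which the strict transform $\mb^{T_i}$ is a proper ideal; these correspond to the finitely many closed points of $\mathrm{Proj}(\mathrm{gr}_{\m_T} T) \cong \mathbb{P}^1_{\sk}$ cut out by the initial form of $\mb$. One then has
\[
e_0(\mb) \;=\; r^2 \;+\; \sum_{i=1}^{k} [T_i:T]\cdot e_0(\mb^{T_i}).
\]
Each $\mb^{T_i}$ has strictly smaller $N$-invariant, so by the inductive hypothesis $e_0(\mb^{T_i}) = \sum_{T_i \leq T'} [T':T_i]\ord(\mb^{T'})^2$. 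Combining this with the multiplicativity $[T':T] = [T':T_i]\cdot[T_i:T]$ along a chain $T \leq T_i \leq T'$, together with the observations that every $T' > T$ lies over exactly one $T_i$ and that $\mb^{T'} = (\mb^{T_i})^{T'}$, I would obtain
\[
e_0(\mb) \;=\; \ord(\mb)^2 \;+\; \sum_{T < T'} [T':T]\,\ord(\mb^{T'})^2 \;=\; \sum_{T \leq T'} [T':T]\,\ord(\mb^{T'})^2.
\]

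The hard part will be the reduction formula itself, which encodes how $e_0$ behaves under a single point blowup. I see two natural routes. The intersection-theoretic route passes to the blowup $\pi: X \to \mathrm{Spec}(T)$ at $\m_T$, writes $\mb\mathcal{O}_X = \mathcal{O}_X(-rE)\cdot \widetilde{\mb}$ with $E$ the exceptional divisor satisfying $E\cdot E = -1$, and uses the projection formula together with $E \cdot \widetilde{\mb} = \sum_i [T_i:T]\ord(\mb^{T_i})$ to extract the recursion. The Hilbert--Samuel route, closer in spirit to the commutative-algebra treatment of \cite{HS}, computes $\lambda(T/\mb^n)$ asymptotically by splitting the length across the affine charts of the blowup and tracking the $\ord(\mb^{T_i})$ contribution chart by chart. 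Either way the bookkeeping is delicate, but once the reduction is in hand the global formula follows cleanly by the induction above.
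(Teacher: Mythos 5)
The paper does not prove this theorem: it is quoted verbatim from Johnston--Verma \cite{JV} (Theorem~3.7) and used as a black box in Section~5, so there is no in-paper argument against which to compare your attempt.

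Your outline follows the standard route in the literature (Lipman, Zariski, Huneke--Swanson Ch.~14): induct through the tree of quadratic transforms, peel off the contribution of the root, and recombine using multiplicativity of residue degrees and transitivity of transforms. The base case and the combinatorics of the inductive recombination are fine (note that $\lambda(T/\overline{\mb})=1$ forces $\mb = \m_T$ since $e_0(\mb)=1$ already implies $\mb$ contains a two-element system of parameters of colength one). One small slip: the displayed statement is the \emph{multiplicity} formula, not the Hoskin--Deligne length formula (Theorem~\ref{HD} in the paper); the two are siblings proved by the same induction, but they are distinct statements and your opening sentence conflates them.

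The genuine gap is that all of the mathematical content is concentrated in the reduction formula
\[
e_0(\mb) \;=\; \ord(\mb)^2 + \sum_{i=1}^{k}[T_i:T]\,e_0(\mb^{T_i}),
\]
and you do not prove it; you only name two possible strategies (intersection theory on the blowup, or an asymptotic Hilbert--Samuel count across charts). The same is true of two auxiliary facts your induction silently uses: that $\lambda(T'/\overline{\mb^{T'}}) < \lambda(T/\overline{\mb})$ whenever $T'$ is a first quadratic transform with $\mb^{T'}\neq T'$ (so the induction terminates), and that $(\mb^{T_i})^{T'} = \mb^{T'}$ for $T_i\leq T'$ (transitivity of the transform). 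These are all true and standard, but as written this is a proof strategy with the crux outsourced, not a proof. For the purposes of this paper the right move is exactly what the author does: cite \cite{JV}. If you wanted a self-contained argument, the Hilbert--Samuel route you mention is the one developed in \cite{HS}, Chapter~14, and the length-additivity computation there is the step you would need to carry out in detail.
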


The following formula (\cite{JV}, Theorem 3.10) is attributed to Hoskin and Deligne.

\begin{theorem}[Hoskin-Deligne Formula]\label{HD}
Let $T$, $\mb$ and $R$ be as in Theorem \ref{T}. Further assume that $\mb$ is an integrally closed ideal. Then, $$\lm(R) = \sum_{T \leq T'}\left(\begin{array}{c}\ord(\mb^{T'})+1\\2\end{array}\right)[T':T].$$
\end{theorem}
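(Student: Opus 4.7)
\medskip

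\noindent
\textbf{Proof proposal.} I would prove the formula by induction on $\lm(R) = \lm(T/\mb)$, working with integrally closed $\m_T$-primary ideals. The base case is $\mb = \m_T$: here $\lm(R) = 1$, and for any immediate quadratic transform $T'$ with local coordinate $x$, $\m_T T' = x T'$, so $\m_T^{T'} = T'$ has order zero. The only $T'$ contributing to the sum is $T$ itself, giving $\binom{\ord(\m_T)+1}{2}[T:T] = \binom{2}{2} = 1$, matching $\lm(R)$.

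For the inductive step, the plan is to establish a one-step recursion relating $\lm(T/\mb)$ to the lengths $\lm(T_i/\mb^{T_i})$ at the finitely many immediate quadratic transforms $T_i$ of $T$ at which $\mb$ has positive order. Concretely, set $r = \ord(\mb)$; since $\sk$ is infinite, pick a general $x \in \m_T \setminus \m_T^2$ and consider the blow-up $T[\m_T/x]$. Generality of $x$ ensures $\mb T[\m_T/x] = x^r \,\mb^{T[\m_T/x]}$, and the maximal ideals $\mn_i$ of $T[\m_T/x]$ containing $\mb^{T[\m_T/x]}$ are finite in number; their localizations are precisely the immediate quadratic transforms $T_i$ of $T$ at which $\mb$ retains positive order. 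A standard length computation along the projection from $\Proj T[\m_T/x]$ to $\Spec T$ (or equivalently, using the associated graded $\gr_{\m_T} T = \sk[x,y]$ and the grading) yields
\[
\lm(T/\mb) \;=\; \binom{r+1}{2} \;+\; \sum_i [T_i:T]\,\lm(T_i/\mb^{T_i}).
\]
One also needs to verify, via Zariski's theorem on products of complete ideals in two-dimensional regular local rings, that each transform $\mb^{T_i}$ is again integrally closed, so that the inductive hypothesis applies at $T_i$.

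Granting this recursion, the induction is almost automatic: apply the Hoskin--Deligne formula inductively at each $T_i$ to expand $\lm(T_i/\mb^{T_i}) = \sum_{T_i \leq T'} \binom{\ord(\mb^{T'})+1}{2}[T':T_i]$, multiply by $[T_i:T]$, and use the multiplicativity of degrees $[T':T] = [T':T_i][T_i:T]$ together with the fact that every $T' \geq T$ contributing a positive-order transform factors uniquely through a unique immediate quadratic transform $T_i$. Adding the $T' = T$ term $\binom{r+1}{2}$ and re-indexing the sum produces exactly $\sum_{T \leq T'} \binom{\ord(\mb^{T'})+1}{2}[T':T]$. Strict inequality $\lm(T_i/\mb^{T_i}) < \lm(T/\mb)$ (visible from the recursion, since $\binom{r+1}{2} \geq 1$) guarantees termination of the induction and finiteness of the sum.

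The main obstacle is establishing the recursion formula. It hinges on two structural facts that are genuine content rather than bookkeeping: first, that for a \emph{general} $x$ the gcd of the generators of $\mb T[\m_T/x]$ is precisely $x^r$ (so the global transform is well-defined and captures the drop in order), and second, that integral closedness of $\mb$ is inherited by each $\mb^{T_i}$. Both are part of Zariski's theory of complete ideals on regular surfaces; once they are in hand, the proof of Theorem \ref{HD} is a clean induction. Compatibility with Theorem \ref{multiplicityFormula} is a good sanity check: subtracting the Hoskin--Deligne formula for $\lm(T/\mb)$ from the analogous formula for $\lm(T/\mb)$ in terms of a minimal reduction should reproduce the multiplicity identity, both being governed by the same quadratic transform recursion.
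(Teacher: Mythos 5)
The paper does not prove this theorem; it is quoted from Johnston and Verma \cite{JV}, Theorem 3.10, so there is no internal argument to compare against. Your sketch is a sound outline of the classical inductive proof via quadratic transforms (an argument of this shape goes back to Lipman, and a treatment close to yours appears in \cite{HS}, Chapter 14). The engine is the one-step recursion
\[
\lm(T/\mb) \;=\; \binom{\ord(\mb)+1}{2} \;+\; \sum_{T_1}[T_1:T]\,\lm\bigl(T_1/\mb^{T_1}\bigr),
\]
the sum over immediate quadratic transforms $T_1$ of $T$, and you correctly isolate the two genuine inputs it requires from Zariski's theory of complete ideals on regular surfaces: that the transform of an integrally closed ideal is again integrally closed (this is usually invoked on its own rather than deduced from the product theorem you cite, though the two are closely entangled in Zariski's development), and the local analysis of the blowup of $\m_T$ that places the defect $\binom{\ord(\mb)+1}{2}$ at the closed fiber and distributes the rest over the finitely many exceptional points at which $\mb^{T_1}$ remains proper. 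The bookkeeping you describe --- multiplicativity of residue degrees $[T':T]=[T':T_1][T_1:T]$, unique factorization of any $T \le T'$ through a unique first quadratic transform, finiteness of the effective sum since $\binom{1}{2}=0$, and strict decrease of length from $\binom{\ord(\mb)+1}{2}\ge 1$ --- is all correct. One thing worth making explicit is where the infinite residue field enters: it lets you pick the affine chart $T[\m_T/x]$ so that the single point of the exceptional fiber missed by the chart carries a trivial transform of $\mb$, which is precisely what your phrase ``for general $x$ the gcd is $x^r$'' is quietly relying on.
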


\begin{corollary}
Let $T$, $\mb$ and $R$ be as in the Hoskin-Deligne formula. Then we have the inequality $$e_0(\mb) + \ord(\mb) \leq 2\lm(R).$$
\end{corollary}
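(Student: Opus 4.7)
The plan is to combine the two displayed formulas for $e_0(\mb)$ and $\lm(R)$ into a single identity and read off the inequality. Writing out the Hoskin-Deligne formula using $\binom{n+1}{2} = \tfrac{n(n+1)}{2}$, we have
\[
2\lm(R) \;=\; \sum_{T \leq T'} \ord(\mb^{T'})\bigl(\ord(\mb^{T'})+1\bigr)[T':T],
\]
while Theorem \ref{multiplicityFormula} gives
\[
e_0(\mb) \;=\; \sum_{T \leq T'} \ord(\mb^{T'})^2 [T':T].
\]
Subtracting yields the clean identity
\[
2\lm(R) - e_0(\mb) \;=\; \sum_{T \leq T'} \ord(\mb^{T'})\,[T':T].
\]

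Now I would isolate the single summand corresponding to $T'=T$. Since $\mb$ is $\m_T$-primary in the two-dimensional regular (hence UFD) ring $T$, no nonunit can divide every generator of $\mb$, so in the factorization $\mb T = x\,\mb^{T}$ the element $x$ is a unit and $\mb^{T}=\mb$. Hence this summand contributes exactly $\ord(\mb)\cdot[T:T] = \ord(\mb)$. Every other summand is a product of nonnegative quantities and is therefore $\geq 0$. Thus
\[
2\lm(R) - e_0(\mb) \;\geq\; \ord(\mb),
\]
which is exactly the claimed inequality.

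There is no genuine obstacle here; the only point requiring a brief justification is the identification of the $T'=T$ term, which amounts to observing that an $\m_T$-primary ideal in a two-dimensional regular local ring has a unit as the gcd of its generators, so the convention defining $\mb^{T'}$ returns $\mb$ itself when $T'=T$.
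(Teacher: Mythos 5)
Your proof is correct and follows the paper's argument exactly: subtract the multiplicity formula from twice the Hoskin-Deligne formula to get $2\lm(R) - e_0(\mb) = \sum_{T\leq T'}\ord(\mb^{T'})[T':T]$, then isolate the $T'=T$ term. The only addition is your explicit justification that $\mb^T = \mb$ (the gcd of generators of an $\m_T$-primary ideal is a unit), which the paper simply asserts.
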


\begin{proof} 
By Theorem \ref{multiplicityFormula}, we have $e_0(\mb) = \sum_{T \leq T'}\ord(\mb^{T'})^2[T':T].$

Using the Hoskin-Deligne formula, we see that $$\lm(R) = \sum_{T \leq T'} \frac{\ord(\mb^{T'})^2 + \ord(\mb^{T'})}{2}[T':T]$$ giving us $$ 2 \lm(R) = e_0(\mb) + \sum_{T \leq T'} \ord(\mb^{T'})[T' : T].$$ Since $T \leq T$ and $\mb^T = \mb$, we get the required inequality. 
\end{proof}

\begin{corollary}
Let $T$, $R$ and $\mb$ be as in Theorem \ref{T}. Then $$e_0(\mb) + \mu(\mb) - 1 \leq 2\lm(T/\mb).$$
\end{corollary}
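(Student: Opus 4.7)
The plan is to reduce to the preceding corollary, which requires $\mb$ to be integrally closed, by passing to $\bar{\mb}$. The basic facts collected in the Remark at the start of the section are precisely what is needed to compare the invariants $e_0$, $\mu$, $\ord$ and $\lm$ for $\mb$ and $\bar{\mb}$.

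First I would apply the preceding corollary to the integrally closed $\m_T$-primary ideal $\bar{\mb}$, obtaining the inequality $e_0(\bar{\mb}) + \ord(\bar{\mb}) \leq 2\lm(T/\bar{\mb})$. I would then translate each term back to $\mb$ using the Remark: the multiplicity is preserved under integral closure, since a minimal reduction $(f,g) \inc \mb$ of $\mb$ is simultaneously a reduction of $\bar{\mb}$ (because $\mb$ and $\bar{\mb}$ have the same integral closure) and hence a minimal reduction of $\bar{\mb}$, giving $e_0(\mb) = \lm(T/(f,g)) = e_0(\bar{\mb})$ by part 2; part 1 gives $\ord(\mb) = \ord(\bar{\mb})$; and part 4 yields $\mu(\mb) - 1 \leq \ord(\mb)$. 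Finally, the inclusion $\mb \inc \bar{\mb}$ produces a surjection $T/\mb \onto T/\bar{\mb}$, hence $\lm(T/\bar{\mb}) \leq \lm(T/\mb)$.

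Chaining these together will give
$$e_0(\mb) + \mu(\mb) - 1 \ \leq \ e_0(\bar{\mb}) + \ord(\bar{\mb}) \ \leq \ 2\lm(T/\bar{\mb}) \ \leq \ 2\lm(T/\mb),$$
which is the desired inequality. The only step that requires any thought is the equality $e_0(\mb) = e_0(\bar{\mb})$, but this is immediate from the description of multiplicity via minimal reductions in part 2 of the Remark. What makes the reduction to $\bar{\mb}$ work cleanly is that passing to the integral closure only tightens the $\mu$--$\ord$ comparison (part 4 becomes an equality for the integrally closed ideal), while the length inequality goes in the correct direction to bound $2\lm(T/\mb)$ from below.
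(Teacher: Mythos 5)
Your proof is correct and follows essentially the same route as the paper: apply the preceding corollary to the integral closure $\bar{\mb}$ and then transfer the inequality back to $\mb$ using the facts collected in the Remark. The only cosmetic difference is the order in which the order/minimal-number-of-generators comparison is invoked (you pass through $\mu(\mb)-1 \leq \ord(\mb) = \ord(\bar{\mb})$, while the paper uses $\ord(\bar{\mb}) = \mu(\bar{\mb})-1 \geq \mu(\mb)-1$), but these are the same estimate.
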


\noindent
{\bf Proof:} Let $\overline{\mb}$ be the integral closure of $\mb$. By the previous corollary, we have $e_0(\overline{\mb}) + \ord(\overline{\mb}) \leq 2\lm(T/\overline{\mb}).$ Since $\overline{\mb}$ is integrally closed, $\ord(\overline{\mb}) = \mu(\overline{\mb}) - 1$. Thus we get $e_0(\overline{\mb}) + \mu(\overline{\mb}) - 1 \leq 2\lm(T/\overline{\mb}).$ Now $e_0(\mb) = e_0(\overline{\mb})$, $\mu(\mb) \leq \mu(\overline{\mb})$ and $\lm(T/\overline{\mb}) \leq \lm(T/\mb)$, giving the required inequailty.\hfill{$\square$}\\

\noindent
{\bf Proof of Theorem \ref{T}:} For any ideal $\mb$ in $T$, we have $\mu(\mb) - 1 = \lm((\mb:\m)/\m)$. But $(\mb:\m)/\mb \simeq \soc(R)$. Thus by the previous corollary, we have   
$$e_0(\mb) + \lm(\soc(R)) \leq 2 \lm(R).\quad\quad\quad (\sharp\sharp)$$ 

Let $(f,g)$ be a minimal reduction of $\mb$. Then $S := T/(f,g)$ is a complete intersection ring (and hence Gorenstein) mapping onto $R$. Moreover $\lm(S) = e_0(\mb)$. Thus $(\sharp\sharp)$ can be read as $\lm(S) + \lm(\soc(R)) \leq 2 \lm(R).$ Rearranging, we get $\lm(S) - \lm(R) \leq \lm(R) - \lm(\soc(R)).$ This proves that $g(R) \leq \lm(R/\soc(R))$.\hfill{$\square$} \\

\noindent
{\bf Acknowledgement}

\noindent
I would like to thank my advisor Craig Huneke for valuable discussions and encouragement.   

\bibliographystyle{amsplain}

\end{document}